\journal{}
\newcommand{\bd}{\begin{definition}}                %inizia definizione
\newcommand{\ed}{\end{definition}}                  %fine definizione
\newcommand{\bc}{\begin{corollary}}                 %inizia corollario
\newcommand{\ec}{\end{corollary}}                   %fine corollario
\newcommand{\bl}{\begin{lemma}}                     %inizia lemma
\newcommand{\el}{\end{lemma}}                       %fine lemma
\newcommand{\bp}{\begin{proposition}}            %inizia proposizione
\newcommand{\ep}{\end{proposition}}                %fine proposizione
\newcommand{\bere}{\begin{remark}}                  %inizia osservazione
\newcommand{\ere}{\end{remark}}                     %fine oservazione
\newcommand{\bt}{\begin{theorem}}
\newcommand{\et}{\end{theorem}}
\newcommand{\be}{\begin{equation}}
\newcommand{\ee}{\end{equation}}
\newcommand{\bit}{\begin{itemize}}
\newcommand{\eit}{\end{itemize}}
\newtheorem{theorem}{Theorem}[section]
\newtheorem{corollary}[theorem]{Corollary}
\newtheorem{lemma}[theorem]{Lemma}
\newtheorem{proposition}[theorem]{Proposition}
\theoremstyle{definition}
\newtheorem{definition}[theorem]{Definition}
\theoremstyle{remark}
\newtheorem{remark}[theorem]{Remark}
\begin{document}

\begin{frontmatter}

%% Title, authors and addresses

%% use the tnoteref command within \title for footnotes;
%% use the tnotetext command for theassociated footnote;
%% use the fnref command within \author or \affiliation for footnotes;
%% use the fntext command for theassociated footnote;
%% use the corref command within \author for corresponding author footnotes;
%% use the cortext command for theassociated footnote;
%% use the ead command for the email address,
%% and the form \ead[url] for the home page:
%% \title{Title\tnoteref{label1}}
%% \tnotetext[label1]{}
%% \author{Name\corref{cor1}\fnref{label2}}
%% \ead{email address}
%% \ead[url]{home page}
%% \fntext[label2]{}
%% \cortext[cor1]{}
%% \affiliation{organization={},
%%             addressline={},
%%             city={},
%%             postcode={},
%%             state={},
%%             country={}}
%% \fntext[label3]{}

\title{A Stone-Weierstrass approximation theorem for monotone functions}

%% use optional labels to link authors explicitly to addresses:
%% \author[label1,label2]{}
%% \affiliation[label1]{organization={},
%%             addressline={},
%%             city={},
%%             postcode={},
%%             state={},
%%             country={}}
%%
%% \affiliation[label2]{organization={},
%%             addressline={},
%%             city={},
%%             postcode={},
%%             state={},
%%             country={}}

%\author{} %% Author name

\author{E. Minguzzi\footnote{E-mail: ettore.minguzzi@unipi.it, ORCID:0000-0002-8293-3802}}
%\footnote{Dipartimento di Matematica, Universit\`a degli Studi di Pisa,  Via
%B. Pontecorvo 5,  I-56127 Pisa, Italy. E-mail:
%ettore.minguzzi@unipi.it, ORCID:0000-0002-8293-3802}}

%% Author affiliation
\affiliation{organization={Dipartimento di Matematica, Universit\`a degli Studi di Pisa},%Department and Organization
            addressline={Via
B. Pontecorvo 5},
            city={Pisa},
            postcode={I-56127},
            country={Italy}}

%% Abstract
\begin{abstract}
We present an approximation theorem for continuous non-decreasing functions on compact preordered spaces, leading to an algebraic characterization of their corresponding function spaces.
As an application, we prove that the family of positive non-decreasing  rational functions with non-negative coefficients  can uniformly approximate all continuous non-decreasing functions on compact intervals. An explicit approximation formula of this type is provided.
\end{abstract}

%%Graphical abstract
%\begin{graphicalabstract}
%\includegraphics{grabs}
%\end{graphicalabstract}

%%Research highlights
%\begin{highlights}
%\item Research highlight 1
%\item Research highlight 2
%\end{highlights}

%% Keywords
%\begin{keyword}
%Compact ordered spaces \sep Gelfand Representation \sep Semi-algebras \sep Bernstein polynomials \sep Causality theory
%% keywords here, in the form: keyword \sep keyword

%% PACS codes here, in the form: \PACS code \sep code

%% MSC codes here, in the form: \MSC code \sep code
%% or \MSC[2008] code \sep code (2000 is the default)

%\end{keyword}

\end{frontmatter}

%% Add \usepackage{lineno} before \begin{document} and uncomment
%% following line to enable line numbers
%% \linenumbers

%% main text
%%

%% Use \section commands to start a section

\section{Introduction}
In this work we establish a Stone-Weierstrass type theorem on the approximation of continuous isotone functions over a compact ordered space.

We recall that a {\em topological preordered space} $(X,\mathscr{T},\le)$ is a triple given by a topological space $(X,\mathscr{T})$ and a {\em preorder} $\le$ (reflexive and transitive relation) on it. The preorder is an {\em order} if it is antisymmetric, in which case the triple is called a {\em topological ordered space}. A {\em closed preordered space} is a topological preordered space for which $\le$ is closed in the product topology. Closed ordered spaces are Hausdorff.
A {\em compact preordered space} is a closed preordered space for which $(X,\mathscr{T})$ is compact. Compact preordered spaces were studied by Nachbin \cite{nachbin65,fletcher82}.

On a topological preordered space a function $f:X\to \mathbb{R}$ is said to be {\em isotone} (non-decreasing) if $x\le y \Rightarrow f(x)\le f(y)$.

On a topological preordered space, given a subset $N\subset X$, we denote $C(X,\le,\mathbb{R},N)$  the set of  continuous isotone real functions that vanish on $N$, and we denote $C^+(X,\le,\mathbb{R},N)$ the set of non-negative  continuous isotone real functions that vanish on $N$. We omit $N$ if it is empty, so writing $C(X,\le,\mathbb{R})$ and  $C^+(X,\le,\mathbb{R})$, respectively.
We endow $C(X,\mathbb{R})$ with the uniform norm, so if $S\subset C(X,\mathbb{R})$, with $\bar S$ we shall mean the closure in the uniform norm.

Given a subset $S\subset C(X,\mathbb{R})$ we denote with $N_S$ the closed subset of $X$ given by
\[
N_S=\cap_{f\in S} f^{-1}(0)=\{x\in X: f(x)=0, \ \forall f\in S\}.
\]
Given a subset $S\subset C(X,\mathbb{R})$ we denote with $\le_S$ the closed relation given by $x \le_S y$ if $f(x)\le f(y)$ for every $f\in S$, that is,
\[
\le_S=\{(x,y)\in X^2: \ f(x)\le f(y), \ \forall f\in S \}.
\]
It is an order if and only if the family $S$ distinguishes (separates) points.

A non-empty subset $F\subset C(X,\mathbb{R})$ is a {\em semi-algebra} \cite{bonsall60} if $f+g$, $\alpha f$, $fg \in F$ whenever $f,g\in F$ and $\alpha\ge 0$. It is called a {\em semi-algebra with identity} (or {\em unit}) if it contains the unit function $1$. According to Bonsall, a semi-algebra is said to be of {\em type-I} if $f/(1+f)\in F$ whenever $f\in F$.

Let $(X,\mathscr{T},\le)$  be a compact preordered space.
We are interested on conditions on $S$ which ensure that every element of $C(X,\le,\mathbb{R})$ or $C^+(X,\le,\mathbb{R},N)$ is uniformly approximated by some element of $S$.

Every compact preordered space is a {\em normally preordered space} \cite{nachbin65,minguzzi11f} which implies that $\le=\le_{C(X,\le,\mathbb{R})}$ and hence, since every continuous function  on $X$ is bounded and can be made non-negative by adding a constant,  $\le=\le_{C^+(X,\le,\mathbb{R})}$. A necessary condition for the mentioned approximation property is then $\le=\le_S$.

Two approximation theorems of this type have appeared in the literature which establish that this condition, under suitable invariance properties of $S$, is also sufficient.
%Both demand that $S$ induces the order $\le$, namely $\le=\le_S$.

The former by Bonsall \cite[Thm.\ 7]{bonsall60} can be written as follows

\begin{theorem} \label{cnz}
Let $(X,\mathscr{T},\le)$ be a compact preordered space.
Let $S\subset C(X, \le, \mathbb{R})$ be a uniformly closed  type-I semi-algebra with identity such that $\le=\le_S$, then $S=C^+(X,\le, \mathbb{R})$.
\end{theorem}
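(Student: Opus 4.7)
My plan proceeds in three stages; throughout I use $S=\bar S$ from the uniform-closure hypothesis.

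\textbf{Stage 1 (non-negativity).} First, every $f\in S$ is non-negative, so $S\subset C^+(X,\le,\mathbb{R})$. Indeed, if $f(x_0)<0$, then $nf\in S$ for each positive integer $n$ makes $1+nf$ vanish on $X$ for $n$ large, so the type-I image $nf/(1+nf)$ would fail to be continuous on $X$, contradicting $nf/(1+nf)\in S\subset C(X,\le,\mathbb{R})$. The content of the theorem is therefore the reverse inclusion $C^+(X,\le,\mathbb{R})\subset S$.

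\textbf{Stage 2 (lattice closure).} The main technical device is the sigmoid
\[
\sigma_{n,c}(h) := \frac{(h/c)^n}{1+(h/c)^n},\qquad h\in S,\ c>0,\ n\in\mathbb{N},
\]
which belongs to $S$ (by scalar multiplication by $1/c$, iterated product, and the type-I operation), is $[0,1]$-valued and isotone in $h$, and satisfies $1-\sigma_{n,c}(a)\le (c/a)^n$ for $a>c$ and $\sigma_{n,c}(a)\le (a/c)^n$ for $a<c$. For $f,g\in S$ bounded by $M$, I would combine the identity $f\wedge g=\int_0^M\chi_{\{f>s\}}\chi_{\{g>s\}}\,\dd s$ with these bounds to show that the Riemann sum
\[
R_{n,K}(f,g) := \frac{M}{K}\sum_{k=1}^K\sigma_{n,kM/K}(f)\,\sigma_{n,kM/K}(g)\in S
\]
converges uniformly on $X$ to $f\wedge g$ with error $O(M/n)+O(M/K)$, whence $f\wedge g\in S$. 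For suprema I would use the uniform limit $(f^n+g^n)^{1/n}\to f\vee g$, valid on $[0,M]^2$ because $\max\le(a^n+b^n)^{1/n}\le 2^{1/n}\max$; this reduces matters to showing $h^{1/n}\in S$ for $h\in S$, which I prove by an entirely analogous Riemann sum based on $h^{1/n}=\int_0^{M^{1/n}}\chi_{\{h>s^n\}}\,\dd s$.

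\textbf{Stage 3 (Nachbin-style approximation).} Given $\phi\in C^+(X,\le,\mathbb{R})$ and $\varepsilon>0$, for each pair $x,y\in X$ I would construct $f_{x,y}\in S$ with $f_{x,y}(x)\approx\phi(x)$ and $f_{x,y}(y)\approx\phi(y)$: when $y\not\le x$, the hypothesis $\le=\le_S$ supplies $g\in S$ with $g(y)>g(x)$, and choosing any $c\in(g(x),g(y))$ makes $\sigma_{n,c}(g)\in S$ tend to $0$ at $x$ and to $1$ at $y$ as $n\to\infty$. Case-splitting on the order relation between $x$ and $y$ (trivial when $x$ and $y$ are order-equivalent; a combination $\phi(x)\cdot 1+(\phi(y)-\phi(x))\sigma$ with non-negative coefficients when $x$ and $y$ are comparable; a sum of two opposite sigmoids weighted by $\phi(x)$ and $\phi(y)$ when incomparable) yields the $f_{x,y}$. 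The usual lattice Stone--Weierstrass pattern then completes the proof: fixing $x$, cover $X$ by the open sets $\{z:f_{x,y}(z)<\phi(z)+\varepsilon\}$ and take a finite $\wedge$ to produce $h_x\in S$ with $h_x<\phi+\varepsilon$ and $h_x(x)\approx\phi(x)$; then cover $X$ by $\{h_x>\phi-\varepsilon\}$ and take a finite $\vee$ to produce $h\in S$ with $\|\phi-h\|_\infty<\varepsilon$.

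\textbf{Main obstacle.} The delicate step is the $\wedge$-closure in Stage 2. Subtraction is disallowed inside a semi-algebra, so the identity $\min(a,b)=a+b-\max(a,b)$ is useless, and the natural rational candidate $fg(f^{n-1}+g^{n-1})/(f^n+g^n)\to f\wedge g$ cannot be realized in $S$ either, because type-I grants only the single-variable quotient $h/(1+h)$. The integral-over-sigmoids representation circumvents this obstruction by rebuilding $\wedge$ purely from sums, products, and non-negative scalar multiples of type-I sigmoids.
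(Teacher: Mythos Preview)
Your approach is essentially correct but genuinely different from the paper's. The paper never establishes lattice closure; instead it deduces Theorem~\ref{cnz} from its main Theorem~\ref{besm2} by noting (Remark~\ref{ntyu}) that a type-I semi-algebra is automatically stable under $\gamma(x)=\bigl(2x/(1+x)\bigr)^{2}$, which is an admissible $\phi$. The proof of Theorem~\ref{besm2} then uses iterates $\phi^{(n)}$ to build, for any closed sets $A,B$ with no relation $b\le a$, a single function in $S$ that is $\approx 0$ on $A$ and $\approx 1$ on $B$ (Lemma~\ref{lcmv}), and approximates a given $f$ directly as a convex-cone combination of such separating functions indexed by the level sets of $f$---no $\wedge$ or $\vee$ is ever formed. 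Your route instead first secures lattice closure via the layer-cake Riemann sum (which genuinely needs the semi-algebra product to form $\sigma_{n,c}(f)\,\sigma_{n,c}(g)$) and then runs the classical lattice Stone--Weierstrass pattern. The paper's method is therefore strictly more general---it applies to mere convex cones stable under a single $\phi$, with no multiplication available---whereas yours is more familiar but tied to the full semi-algebra hypothesis.

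One small repair in Stage~1: for integer $n$ the function $1+nf$ need not \emph{vanish} anywhere on $X$. Take instead the real scalar $\lambda=-1/f(x_0)>0$; then $\lambda f\in S$ and $(1+\lambda f)(x_0)=0$, so the type-I quotient $\lambda f/(1+\lambda f)$ is not a well-defined continuous function, giving the contradiction. Your Stage~2 estimates do go through: the key observation is that $s\mapsto\sigma_{n,s}(a)\sigma_{n,s}(b)$ is monotone decreasing, so the Riemann-sum error against the integral is at most $M/K$ uniformly in $x$, and the integral itself differs from $\min(a,b)$ by $O(M/n)$ via the substitution $s=\min(a,b)\,t$ and the evaluation $\int_0^\infty(1+t^n)^{-1}\,\dd t=\tfrac{\pi/n}{\sin(\pi/n)}=1+O(1/n^2)$.
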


The latter by Besnard \cite{besnard13} reads instead
\begin{theorem} \label{besn}
Let $(X,\mathscr{T},\le)$ be a compact ordered space.
Let $S\subset C(X, \le, \mathbb{R})$ be such that  $\le=\le_S$,  $S$ is closed under sum, and it is also closed under any map of the form $f\mapsto h(f)$ where $h: \mathbb{R} \to \mathbb{R}$ is  any continuous non-decreasing piecewise linear function. Then $\bar S=C(X,\le, \mathbb{R})$.
\end{theorem}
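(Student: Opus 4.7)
The plan is to reduce the conclusion to a Stone--Weierstrass-type theorem of Nachbin for compact preordered spaces: a uniformly closed convex subcone $L\subset C(X,\le,\mathbb{R})$ containing the constants, closed under the pointwise lattice operations $\vee$ and $\wedge$, and satisfying $\le=\le_L$, must equal $C(X,\le,\mathbb{R})$. (This cone variant is obtained by the standard Nachbin patching argument, which uses $\vee,\wedge$ together with the separation $\le=\le_L$ to produce, for each pair of points, an element of $L$ matching prescribed values.) The trivial hypotheses for $L=\bar S$ are immediate: constants lie in $\bar S$ by applying $h\equiv c$ to any element of $S$; $\bar S$ is a convex cone since $h(t)=\lambda t$ with $\lambda\ge 0$ is piecewise linear non-decreasing; and $\le_{\bar S}=\le$ because $\le\subset\le_{\bar S}\subset\le_S=\le$. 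A useful preliminary observation, used repeatedly, is that $\bar S$ is in fact closed under composition with any continuous non-decreasing $h:\mathbb{R}\to\mathbb{R}$: on a compact interval containing the range of $f\in\bar S$, $h$ is uniformly approximable by piecewise linear non-decreasing functions, and the approximation pulls back to a uniform approximation of $h(f)$.

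Closure of $\bar S$ under $\vee$ is then obtained from the classical inequality $\max(f,g)\le(f^n+g^n)^{1/n}\le 2^{1/n}\max(f,g)$: after a constant shift making $f,g\ge 0$, applying the preliminary fact with $t\mapsto t^n$ and $t\mapsto t^{1/n}$ shows $(f^n+g^n)^{1/n}\in\bar S$, and passing to the uniform limit (and shifting back, again a piecewise linear non-decreasing composition) gives $\max(f,g)\in\bar S$.

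The main obstacle is the closure under $\wedge$, because the natural identity $\min(f,g)=f+g-\max(f,g)$ calls for a subtraction unavailable inside the cone. I would sidestep this in two steps. First, $\bar S$ is closed under products of non-negative elements: for $f,g\in\bar S$ with $f,g\ge 0$ and $\epsilon>0$, the preliminary fact applied to $\log(\cdot+\epsilon)$ and $\exp$ yields
\[
(f+\epsilon)(g+\epsilon)=\exp\bigl(\log(f+\epsilon)+\log(g+\epsilon)\bigr)\in\bar S,
\]
and letting $\epsilon\to 0$ gives $fg\in\bar S$. Second, for $f,g\in\bar S$ with $0\le f,g\le M$ the piecewise linear non-decreasing ramp $r_{s,n}(t):=\min(\max(n(t-s),0),1)$ makes $r_{s,n}(f)\,r_{s,n}(g)\in\bar S$ a continuous surrogate for $\mathbf{1}_{\min(f,g)>s}=\mathbf{1}_{f>s}\mathbf{1}_{g>s}$; discretising the layer-cake identity $\min(f,g)=\int_0^M\mathbf{1}_{\min(f,g)>s}\,ds$ yields the Riemann sum
\[
T_{N,n}:=\frac{M}{N}\sum_{k=1}^{N}r_{kM/N,\,n}(f)\,r_{kM/N,\,n}(g)\in\bar S,
\]
which by a direct case analysis on $m:=\min(f,g)(x)$ satisfies $|T_{N,n}(x)-m|\le M/N+1/n$ uniformly in $x$. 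Passing to the limit gives $\min(f,g)\in\bar S$, and the invocation of the Nachbin-type theorem completes the proof.
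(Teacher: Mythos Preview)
Your proof is correct but follows a route entirely different from the paper's. The paper deduces Besnard's theorem as a corollary of its main Theorem~\ref{besm2}: it projects $S$ via the piecewise linear non-decreasing maps $p_+(t)=\max(t,0)$ and $p_-(t)=\min(t,0)$ onto $S^+=p_+(S)$ and $S^-=p_-(S)$, checks that $S^+$ is a convex cone invariant under a suitable piecewise linear $\phi$, applies Theorem~\ref{besm2} to obtain $\overline{S^+}=C^+(X,\le,\mathbb{R})$ (and by the same argument on the reversed order $\overline{S^-}=C^-(X,\le,\mathbb{R})$), and concludes via $\bar S\supset\overline{S^+}+\overline{S^-}=C(X,\le,\mathbb{R})$. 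Your argument instead builds the lattice structure on $\bar S$ directly---$\vee$ via power means $(f^n+g^n)^{1/n}$, products of non-negative elements via $\log$/$\exp$, and $\wedge$ via a discretised layer-cake identity using products of ramp functions---and then invokes a Nachbin-type lattice Stone--Weierstrass patching. The paper's route is short in context and serves its expository purpose of exhibiting Theorem~\ref{besm2} as a common generalisation of Bonsall's and Besnard's theorems; your route is self-contained (modulo the Nachbin patching, whose two-point interpolation step you correctly sketch) and bypasses the $\phi$-iteration machinery entirely, at the cost of the somewhat intricate layer-cake construction needed to recover $\wedge$ without subtraction.
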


Taking $h$ equal to a constant shows that $S$ includes all real constants. Also taking $h=ax$, $a\ge 0$, shows that $S$ is a cone, hence convex by closure under sum.

Observe that the Theorem \ref{cnz} characterizes $C^+(X,\le, \mathbb{R})$ among its subsets, while Theorem \ref{besn} characterizes $C(X,\le, \mathbb{R})$ among its subsets.
%
%Bonsall's type-I condition is nicer, as more algebraic in spirit, than Besnard's invariance under the large family of  continuous non-decreasing piecewise linear functions, where these functions, with their corners, are not analytic. On the other hand, Bonsall's theorem imposes a strong semi-algebra condition that is absent in Besnard's theorem. Also the proof by Besnard is  simpler and more direct than Bonsall's.
%
%In this work we are going to prove a theorem that improves the previous theorems.  Our proof, as Besnard's, follows the lines of Brosowski and Deutsch simple proof of the original Stone-Weierstrass theorem \cite{brosowski81}. Elements of Nachbin's theory of topological ordered spaces are used to deal with the order.

Another difference is that Theorem \ref{cnz} is more general in dealing with preorders rather than orders.

Bonsall's type-I condition is more elegant, being more algebraic in spirit than Besnard's condition, which requires invariance under a large family of continuous, non-decreasing, piecewise linear functions. These piecewise linear functions lack analyticity due to their corners. On the other hand, Bonsall's theorem imposes a strong semi-algebra condition that is absent in Besnard's result. Additionally, Besnard's proof is simpler and more direct than Bonsall's.

This work presents a theorem that both generalizes and strengthens prior results. Following Besnard’s adaptation of the Brosowski–Deutsch proof of the Stone–Weierstrass theorem, our argument incorporates a crucial new element: the identification and careful application of specific fixed-point properties of a subset-preserving function $\phi$.

%In this work, we prove a theorem that generalizes and improves upon these previous results. Following Besnard's approach, our argument adapts the simple proof of the original Stone-Weierstrass theorem by Brosowski and Deutsch \cite{brosowski81}. The new element is in the recognition of the importance and accurate application  of certain fixed point properties of a function $\phi$ which preserves the subset $S$.
Curiously,  Nachbin's theory of topological ordered spaces does not really enter the proof of the following result.

\begin{theorem} \label{besm2}
Let $(X,\mathscr{T},\le)$ be a compact preordered space.
Let $\phi: [0,+\infty)\to [0,+\infty)$ be a non-decreasing continuous function such that $\phi(x)\le x$ with equality iff $x=0$ or $x=1$ (see some examples below).
Let $S\subset C^+(X,\le,\mathbb{R})$ be non-empty and such that
\begin{itemize}
\item[(i)] $S$ is a convex cone: $S+S\subset S$ and $\lambda S\subset S$ for every $\lambda > 0$,
\item[(ii)] If $f \in S$  then $\phi(f) \in S$,
\item[(iii)] $S$ generates $\le$, that is, $\le=\le_S$.
 \end{itemize}
Then $\bar S=C^+(X,\le,\mathbb{R},N_S)$.
In particular, if additionally
\begin{itemize}
\item[(iv)] $N_S=\emptyset$, that is: for every $x\in X$ there is some $f\in S$ such that $f(x)>0$ (this follows if $S$ contains the identity).
    %In particular, $S$ is non-empty.
\end{itemize}
Then $\bar S=C^+(X,\le,\mathbb{R})$.
\end{theorem}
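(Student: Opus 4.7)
The plan is to adapt the Brosowski--Deutsch scheme used by Besnard in the proof of Theorem~\ref{besn}, with iterates of $\phi$ in place of Besnard's compositions with piecewise-linear functions. The inclusion $\bar S \subseteq C^+(X,\le,\mathbb{R},N_S)$ is immediate, since continuity, non-negativity, isotonicity, and vanishing on $N_S$ all pass to uniform limits. The substance lies in the reverse inclusion: for each $g \in C^+(X,\le,\mathbb{R},N_S)$ and each $\epsilon > 0$, one has to produce $h \in S$ with $\|h - g\|_\infty < \epsilon$.

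I would first extract the dynamics of $\phi$. Because $\phi(x) \le x$ with equality only at $x = 0$ and $x = 1$, the iterates $\phi^n$ form a pointwise non-increasing sequence that tends to $0$ on $[0,1)$ and to $1$ on $(1,+\infty)$; by Dini's theorem the convergence is uniform on every compact subset of $[0,1) \cup (1,+\infty)$. Together with the convex cone and $\phi$-closure assumptions on $S$, this lets one turn any $f \in S$ with $f(x_0) > 1 > f(y_0)$ into a function $k \in S$ that, after suitable iteration of $\phi$ and positive rescaling, is above $1 - \eta$ on an open upper neighborhood of $x_0$ and below $\eta$ on an open lower neighborhood of $y_0$, for any prescribed $\eta > 0$.

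The technical core is a separation lemma: for disjoint compact sets $A \subseteq X$ (upper) and $B \subseteq X$ (lower) with $A \cap N_S = \emptyset$, and for any $\delta \in (0,1)$, there exists $p \in S$ with $p > 1 - \delta$ on $A$ and $p < \delta$ on $B$. For each pair $(a,b) \in A \times B$, the disjointness and the upper/lower property force $a \not\le b$; the generating assumption $\le = \le_S$ then provides an $f_{ab} \in S$ with $f_{ab}(a) > f_{ab}(b) \ge 0$, rescaling in the cone arranges $f_{ab}(a) > 1 > f_{ab}(b)$, and the sharpening step above produces pairwise separators $k_{ab} \in S$ of arbitrary precision. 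Compactness of $A$ and $B$ extracts finite subfamilies, which are assembled by positive linear combinations, and a final application of $\phi^n$ collapses the combination into the near-indicator $p$.

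With the separation lemma in hand, the Brosowski--Deutsch level-set routine closes the argument. One partitions $[0, \|g\|_\infty + \epsilon]$ by $0 = y_0 < y_1 < \cdots < y_N$ of step less than $\epsilon$, sets $A_i = g^{-1}([y_i, +\infty))$ (closed upper) and $B_i = g^{-1}([0, y_i - \epsilon])$ (closed lower), uses that $g$ vanishes on $N_S$ to ensure $A_i \cap N_S = \emptyset$ whenever $y_i > 0$, applies the separation lemma to obtain $p_i \in S$ close to $\chi_{A_i}$ on $A_i \cup B_i$, and forms $h = \sum_{i=1}^N (y_i - y_{i-1}) p_i \in S$, which is within $\epsilon$ of $g$ by the standard telescoping level-set computation. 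The specialization to case (iv) is immediate, since $N_S = \emptyset$ trivialises the vanishing condition. The step I expect to be the main obstacle is the combination inside the separation lemma: since $S$ is only a convex cone and admits no lattice structure, the elementary separators $k_{ab}$ must be scaled with care against the sizes of the inevitable finite subcovers, so that a single positive linear combination is strictly above $1$ on all of $A$ while strictly below $1$ on all of $B$; only then does the terminating $\phi^n$-iteration yield a genuine near-indicator.
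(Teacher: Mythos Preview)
Your proposal is correct and follows essentially the paper's route: iterates of $\phi$ produce pointwise separators (the paper's Lemma~\ref{kord}), compactness plus convex combinations plus further $\phi$-iteration upgrade these to set separators with a global bound $p(X)\subset[0,1+\delta)$ (Lemma~\ref{lcmv}), and a level-set telescoping sum assembles the approximant. The paper differs only cosmetically, separating out the approximation of the constant $\min g$ as a preliminary step rather than absorbing it into the telescoping sum; note that the global bound on each $p_i$, which you leave implicit in ``near-indicator'', is needed in the final estimate and is exactly what the terminating $\phi^{(n)}$-pass delivers since $\phi^{(n)}(x)\downarrow 1$ for $x>1$.
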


As a consequence, if $S$ is uniformly closed then its coincides with $C^+(X,\le,\mathbb{R},N_S)$ and so it is invariant under the lattice operations $\vee$ and $\wedge$.

We are not assuming that $S$ contains the (necessarily non-negative) constants.

\begin{remark}
This and the following results can be specialized to the discrete order, $\le=\Delta=\{(x,x): x\in X\}$, for which they provide  already original statements. The condition $\le =\le_S$ reads ``for every $x,y\in S$, $x\ne y$, there is $f\in S$, such that $f(x)<f(y)$''. The set $C^+(X,\le,\mathbb{R})$ is the family of continuous non-negative functions.
\end{remark}

\begin{remark}
Observe that for any subset $N$, $C^+(X,\le,\mathbb{R},N)$ is uniformly closed and  satisfies (i), (ii) and (iii). Actually, (i) holds also for $\lambda=0$ because $0\in C^+(X,\le,\mathbb{R},N)$. Observe also that $C^+(X,\le,\mathbb{R})$ is uniformly closed, satisfies (i)-(iv) and all non-negative constants belong to it.
\end{remark}

%The function $\phi:[0,\infty)\to [0,\infty)$ is continuous increasing and such that $\phi(x)\le x$ equality holding only for $x=0,1$ ($\phi$ can also be written $x \textrm{sech}(\log x)$ and

A useful example for function $\phi$ is $\alpha_1:=2x^2/(1+x^2)=x \operatorname{sech}(\log x)$.
There are other functions with the properties of $\phi$ including
\begin{itemize}
\item[($\alpha$)] $\alpha_a(x):=x\operatorname{sech}(a \log x)=2x^{1+a}/(1+x^{2a})$ for
$0<a\le 1$,
\item[($\beta$)]  $\beta(x):=x-\frac{x^2(1-x)^2}{(1+x)^3}=\frac{x(1+2x+5x^2)}{(1+x)^3}$,
\item[($\gamma$)] $\gamma(x):=x-\frac{x(1-x)^2}{(1+x)^2} =\Big(\frac{2x}{1+x}\Big)^{\!2}$.
\end{itemize}
%they are all increasing in $[0, +\infty)$ but $\beta$ is also increasing in $(-1,0]$.

Of course, it is also easy to construct a piecewise linear function $\phi$ with the desired properties.

\begin{remark}  \label{ntyu}
If $S$ is stable under squaring $f \to f^2$, e.g.\ because it is a semi-algebra, and under the map $f \to f/(1+f)$ then it is also stable under $\gamma$, which shows that Bonsall's result, Theorem  \ref{cnz}, follows  as a special case of this theorem.
\end{remark}

\begin{remark} \label{caew}
Sometimes using $\beta$ and $\gamma$ can be convenient as their singularity at $-1$ together with the cone assumption on $S$ might be used to get that $S$ consists of non-negative functions, not by assumption, but by using  the fact that $\beta$ or $\gamma$ are well defined on $S$.
\end{remark}

We denote with $\phi^{(n)}$ the composition of $\phi$ for $n$ times. It can be observed that  $\phi^{(n)}$  has fixed points $0$ and $1$ and converges (non-uniformly) to the characteristic function of $[1,+\infty)$.

We shall need a couple of lemmas. It  can be noted that we do not need to assume that $(X,\mathscr{T})$ is Hausdorff. This property would follow from a antisymmetry condition on $\le$ which we do not impose.

\begin{lemma} \label{kord}
Let $a,b\in X$ be such that $b\nleq a$ then for any $\epsilon>0$ there is $f_{ab}\in S$ such that, $0\le f_{ab}\le 1+\epsilon$, $f_{ab}(a)\in [0,\epsilon)$ and $f_{ab}(b)=1$.
\end{lemma}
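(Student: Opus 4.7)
The first step will be to use hypothesis (iii) together with $b \nleq a$ to secure some $g \in S$ with $g(a) < g(b)$; since $g \ge 0$ this forces $g(b) > 0$, so the cone assumption (i) allows me to rescale and set $h := g/g(b) \in S$, so that $h(b) = 1$ and $t_0 := h(a) \in [0,1)$. What remains is to compress $h(a)$ toward $0$ while keeping $h(b) = 1$ fixed; this is what iterating $\phi$ is designed to do.

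Next I would establish the pointwise behavior of the iterates on $[0,\infty)$. Since $\phi(t) \le t$ the sequence $\phi^{(n)}(t)$ is non-increasing in $n$; being non-negative it converges to a limit $L(t)$, and continuity of $\phi$ forces $L(t)$ to be a fixed point, hence $L(t) \in \{0, 1\}$ by assumption. For $t \in [0,1)$ the iterates stay strictly below $1$, so $L(t) = 0$; for $t = 1$ trivially $\phi^{(n)}(1) = 1$; for $t > 1$ monotonicity of $\phi$ yields $\phi^{(n)}(t) \ge \phi(1) = 1$, forcing $L(t) = 1$. In particular $\phi^{(n)}(t_0) \to 0$ while $\phi^{(n)}(1) = 1$ for every $n$.

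The delicate point will be the uniform upper bound $\phi^{(n)}(h) \le 1 + \epsilon$, because the pointwise limit of $\phi^{(n)}$ is the discontinuous indicator of $[1,\infty)$, so uniform convergence on all of $[0,\infty)$ cannot be expected. My strategy is to split the range $[0, M]$, with $M := \|h\|_\infty$, into two pieces. On $[0,1]$ one has $\phi^{(n)}(t) \in [0,1]$ for every $n$, since $\phi$ maps $[0,1]$ into itself. On the compact interval $[1, M]$ the functions $\phi^{(n)}$ form a monotone decreasing sequence of continuous functions converging pointwise to the continuous constant $1$, so Dini's theorem promotes this to uniform convergence. Combining the two regions, I can choose $n$ so large that $\phi^{(n)}(h) \le 1 + \epsilon$ everywhere on $X$ and simultaneously $\phi^{(n)}(t_0) < \epsilon$. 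Iterated application of hypothesis (ii) guarantees $\phi^{(n)}(h) \in S$, and this is the desired $f_{ab}$.
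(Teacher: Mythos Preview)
Your proof is correct and follows essentially the same approach as the paper: normalize so that $h(b)=1$, then iterate $\phi$ to drive $h(a)$ to $0$ while fixing $h(b)=1$ and controlling the supremum. The only minor difference is that you invoke Dini's theorem on $[1,M]$ for the uniform bound $\phi^{(n)}(h)\le 1+\epsilon$, whereas the paper (and a slightly shorter argument) simply observes that since $\phi$ is non-decreasing one has $\sup_X \phi^{(n)}(h)=\phi^{(n)}(\|h\|_\infty)$, which is a single real sequence decreasing to $1$.
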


\begin{proof}
Since $b\nleq a$ by (iii) we can find $f\in S$ such that $f(a)<f(b)$. Multiplying $f$ by a positive constant we can further assume that $f(b)=1$.
Since \( \phi^{(n)}(f) \) converges pointwise to the characteristic function of \( \{ f \ge 1 \} \), and \( \phi^{(n)}(x) \) is decreasing in $n$ for \( x > 1 \), the maximum of \( \phi^{(n)}(f) \) converges to 1. Thus, for large \( n \), \( \phi^{(n)}(f) \le 1 + \epsilon \). Similarly, for sufficiently large $n$, $\phi^{(n)}(f)(a)<\epsilon$, thus setting $f_{ab}=\phi^{(n)}(f)$ we conclude.
%finally replacing $f$ with $\phi^{(n)}(f)$ we can further assume that $f(a)<\epsilon$.
\end{proof}

\begin{lemma} \label{lcmv}
Let $A,B$ be two closed (hence compact) subsets of $X$ such that for every $a\in A$ and $b\in B$,  $b\nleq a$. Then for every $\delta$, $0<\delta<1$ there is $f_{AB}\in S$ such that $f_{AB}(A)\in [0,\delta)$ $f_{AB}(B)\in [1,1+\delta)$, $f_{AB}(X)\in [0,1+\delta)$.
\end{lemma}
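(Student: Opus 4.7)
My plan is to adapt the Brosowski--Deutsch scheme, using iterates of $\phi$ in place of lattice operations. The analysis relies on three elementary facts about $\phi^{(n)}$, all following from $\phi(y)\le y$ with equality only at $y\in\{0,1\}$ together with continuity and monotonicity of $\phi$: for $y\in[0,1)$ the sequence $\phi^{(n)}(y)$ decreases to $0$; $\phi^{(n)}(1)=1$ for every $n$; and for $y>1$ the sequence $\phi^{(n)}(y)$ decreases to $1$. The key trick that makes the argument fit together cleanly is, for each $a\in A$, to build an auxiliary $p_a\in S$ together with a neighbourhood $V_a$ of $a$ defined directly from $p_a$, so that $V_a$ does not depend on the downstream choice of any compression parameter.

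For the inner construction I would fix $a\in A$ and apply Lemma \ref{kord} with $\epsilon=1/4$ to obtain, for each $b\in B$, some $f_{ab}\in S$ with $f_{ab}(a)<1/4$, $f_{ab}(b)=1$, $f_{ab}\le 5/4$. Setting $\hat f_{ab}:=(4/3)f_{ab}$ gives $\hat f_{ab}(a)<1/3$ and $\hat f_{ab}>1$ on the open set $\{f_{ab}>3/4\}$, a neighbourhood of $b$. By compactness of $B$ there is a finite subcover indexed by $b_1,\dots,b_k$ (with $k=k(a)$). Because $\phi^{(n)}(y)\to 0$ for $y<1$, while $\phi^{(n_j)}(\hat f_{ab_j})\ge\phi^{(n_j)}(1)=1$ on the cover piece by monotonicity, I would choose iterates $n_j$ so large that $\phi^{(n_j)}(\hat f_{ab_j})(a)<1/(8k)$, and set
\[
p_a := \sum_{j=1}^{k}\phi^{(n_j)}(\hat f_{ab_j}) \in S.
\]
Then $p_a(a)<1/8$, $p_a\ge 1$ on $B$, and $p_a\le C_a$ for some finite constant. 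I would let $V_a:=\{p_a<1/2\}$, which is open, contains $a$, and is intrinsic to $p_a$.

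For the outer combination and thresholding, once the $V_a$ are available, I would extract a finite subcover $A\subset V_{a_1}\cup\dots\cup V_{a_m}$ and only then fix $\mu:=1/(4m)$. For each $i$ I would pick $N_{a_i}$ so large that $\phi^{(N_{a_i})}(1/2)\le 1/4$ and $\phi^{(N_{a_i})}(C_{a_i})\le 1+\mu$, and set $\tilde g_{a_i}:=\phi^{(N_{a_i})}(p_{a_i})\in S$; monotonicity of $\phi^{(N_{a_i})}$ then gives $\tilde g_{a_i}\le 1/4$ on $V_{a_i}$, $\tilde g_{a_i}\in[1,1+\mu]$ on $B$, and $\tilde g_{a_i}\le 1+\mu$ on $X$. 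The average
\[
h := \tfrac{1}{m}\sum_{i=1}^{m}\tilde g_{a_i} \in S
\]
lies in $[1,1+\mu]$ on $B$ and is bounded by $1+\mu$ on $X$; for $x\in A$ one summand is $\le 1/4$ while the others are $\le 1+\mu$, which after elementary arithmetic with $\mu=1/(4m)$ yields $h(x)\le 1-1/(2m)$. Finally I would pick $M$ with $\phi^{(M)}(1-1/(2m))<\delta$ and $\phi^{(M)}(1+\mu)<1+\delta$ and set $f_{AB}:=\phi^{(M)}(h)\in S$; monotonicity of $\phi^{(M)}$ together with $\phi^{(M)}(1)=1$ then delivers $f_{AB}(A)\subset[0,\delta)$, $f_{AB}(B)\subset[1,1+\delta)$, and $f_{AB}(X)\subset[0,1+\delta)$.

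The hard part is the bookkeeping of the quantifier order: the inner cover size $k$ depends on $a$, so the $n_j$ can only be chosen after that cover is fixed; and the outer cover size $m$ depends on the neighbourhoods $V_a$, so $\mu$, the $N_{a_i}$, and $M$ can only be chosen after $m$ is fixed. The essential device preventing a circular dependence is defining $V_a$ directly via $p_a$ rather than via the compressed $\tilde g_a$, so that the outer cover is chosen before $\mu$.
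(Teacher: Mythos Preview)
Your argument is correct and follows the same two-stage Brosowski--Deutsch scheme as the paper, using iterates of $\phi$ to push values toward the fixed points $0$ and $1$. The one structural difference is that you run the two covering stages in the opposite order: the paper first fixes $b\in B$, covers $A$, and \emph{averages} so as to keep the value at $b$ pinned exactly at $1$ while forcing the value on $A$ strictly below $1$; you instead first fix $a\in A$, cover $B$, rescale by $4/3$, and \emph{sum} so as to force the value on $B$ at least $1$ while keeping the value at $a$ small. Both orderings work; your explicit bookkeeping (defining $V_a$ from $p_a$ before the outer cover is extracted and before $\mu$ is chosen) makes the quantifier dependencies cleaner than in the paper's presentation, at the cost of a slightly longer chain of constants.
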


\begin{proof}
Let $0<\epsilon<1$. For all $a \in A$ and $b\in B$ we can find $f_{ab}\in S$ as in Lemma \ref{kord}. We fix $b\in B$ and let $a$ vary in $A$. Since $f_{ab}$ is continuous there is an open neighborhood $V_a$ of $a$ such that $f_{ab}(V_a)\subset [0,\epsilon)$. By compactness of $A$ there are $V_1,\ldots, V_k$ corresponding to $a_1,\ldots, a_k$ such that $A\subset V_1\cup \cdots \cup V_k$. For $k=1$ let $g_{b}=f_{a_1b}\in S$ so that for every $a\in A$, $g_b(a)<1$, and moreover $g_b(b)=1$. For $k\ge 2$ let $g_b:=\frac{1}{k}\sum_{1}^k \phi^{(n)}(f_{a_i b})$, where  $n$ is chosen sufficiently large that $\phi^{(n)}([0,1+\epsilon))\subset [0,\frac{k-\epsilon}{k-1})$. Note that $g_b \in S$. Moreover, $g_b(b)=1$ and for all $a\in A$,
\[
g_b(a)< \frac{1}{k} [(k-1)\frac{k-\epsilon}{k-1}+\epsilon]=1.
\]
In both cases, by compactness of $A$ there is $\alpha \in (0,1)$ such that $g_b(A)$ is a compact subset of $[0,\alpha)$. Let $f_{Ab}:=\phi^{(m)}(g_b/\alpha) \in S$ where $m$ can be chosen sufficiently large that $\phi^{(m)}([0,\sup g_b(A)]/\alpha)\subset [0,\delta)$ while $f_{Ab}(b)\in (1,1+\delta)$, $f_{Ab}(X)\subset [0,1+\delta)$. By continuity of $f_{Ab}$ there is an open neighborhood $U_b$ of $b$ such that $f_{Ab}(U_b)\in (1,1+\delta)$.  By compactness of $B$ there are $U_1,\ldots, U_l$ corresponding to $b_1,\ldots, b_l$ such that $B\subset U_1\cup \cdots \cup U_l$.
%For $l=1$ let $f=f_{Ab_1}\in S$ so that $f(A)\subset [0,\delta)$ and $f(B)\subset [1,1+\delta)$. For $l>1$
Let us consider $h=\sum_1^l \phi^{(r)}(f_{A b_i})\in S$ where $r$ is chosen sufficiently large that $\phi^{(r)}([0,\delta))\subset [0,\delta/l)$. We observe that $h(A)\subset [0,\delta)$ and for every $b\in B$, $b\in U_i$ for some $i$ so
\[
h(b) \ge\phi^{(r)}(f_{Ab_i}(b))>1.
\]
This proves that $h(B)\subset (1,\infty)$ and noting that $h$ is bounded on the compact set $X$,   defining $f:=\phi^{(s)}(h)\in S$ we get for sufficiently large $s$, $f(X)\subset [0, 1+\delta)$, hence $f(B)\subset (1, 1+\delta)$ while keeping $f(A)\subset [0,\delta)$.
\end{proof}

\begin{proof}[Proof of Theorem \ref{besm2}]
The constant function \( 0 \) can be uniformly approximated by elements of \( S \). Indeed, since \( S \) is non-empty, take any \( f \in S \); then \( \frac{1}{n}f \in S \) for all \( n \in \mathbb{N}_+ \), and \( \frac{1}{n}f \to 0 \) uniformly.

%To start with observe that the constant function $f:=0$ is uniformly approximated by elements of $S$ as $S$ i this function belongs to $S$ by the cone property.

Let us prove that if $N_S=\emptyset$ then every other positive constant can be uniformly approximated (if $N_S\ne \emptyset$ the positive constant functions are not to be approximated as they do not belong to $C^+(X,\le,\mathbb{R},N_S)$, as they do not vanish on $N_S$). Indeed, for every $x\in X$ we can find a function $g_x\in S$ such that $g_x(x)>0$, hence by continuity, there is a neighborhood $U_x$ of $x$ such that $g_x>0$ over $U_x$. Let $U_1, \ldots, U_k$ be a finite covering of $X$ relative to points $x_1, \ldots, x_k$. Let $g=\sum_k g_{x_i}\in S$, then $g>0$ over $X$ and thus it has a minimum $m>0$ and a maximum $M\ge m$. Let $\lambda >0$ be a constant.
Let $h=\lambda \phi^{(n)}(g/m)$ then for any $\epsilon$ we have for sufficiently large $n$, $h(X)\subset [\lambda ,\lambda+\epsilon)$ which proves that the constant $\lambda$ is uniformly approximated (actually from above, and this was also the case for the zero function since the elements of $S$ are non-negative).
%includes the case of constant equal to zero as the zero function belongs to $S$).

Let  $f\in C^+(X,\le,\mathbb{R}, N)$ and let $m$ and $M$ be its minimum and maximum over the compact set $X$. If $m=M$  then either $m=0$ which has been already shown to be approximable from above, or $m>0$ which implies that $f>0$ over $X$ and so $N_S=\emptyset$, which implies, by the previous argument, that any constant, and so the constant function $m$, is approximable from above.

We can assume that $0\le m<M$ and hence, rescaling, we can work with  $\tilde f=f/(M-m)$  instead of $f$ (since $S$ is a cone, an arbitrary uniform approximation of $\tilde f$  leads to a uniform approximation of $f$). Hence we can assume $f(X)=[m,m+1]$ without loss of generality. Let $n\in \mathbb{N}\backslash\{0,1\}$.
We want to prove that there is $F\in S$ such that $\Vert f-F\Vert<3/n$.

We set $A_i=f^{-1}([m,m+\frac{i}{n}])$, $B_i=f^{-1}([m+\frac{i+1}{n}, m+1])$ for each $i=0,1, \ldots, n-1$. Since $f$ is continuous and $X$ is compact, the sets $A_i,B_i$ are closed hence compact. Note that if $a\in A_i$ and $b\in B_i$ then $f(b)>f(a)$ thus since $f$ is non-decreasing it cannot be $b\le a$.
For each $i$ by Lemma  \ref{lcmv} there is $f_i \in S$ such that $f_i(A_i)\subset [0,1/n)$, $f_i(B_i)\subset [1,1+1/n)$, $f_i(X)\subset [0,1+1/n)$. Let $g\in S$ be a function that approximates the constant $m$ from above, $m\le g<m+1/n$. (this $g$ exists because, for $m=0$ it was shown previously, while if $m>0$ then, $f>0$ over $X$ and so  $N_S=\emptyset$ and every constant is approximable from above as  observed previously.)
Let us consider the function $F= g+ \frac{1}{n}\sum_{i=0}^{n-1} f_i$ which  belongs to $S$.

Let $x\in X$. There are two cases to consider, depending on the value of $f(x)$.

Suppose that for some $j\in\{0, 1, \ldots, n-1\}$, $m+\frac{j}{n}< f(x)<m+\frac{j+1}{n}$.
We have $x\in A_i$ for $j+1\le i\le n-1$ (which implies $f_i(x) \in [0, 1/n)$); we have $x\in B_i$ for $i\le j-1$ (which implies  $f_i(x) \in [1, 1+1/n))$, and we have $f_j(x) \in [0, 1+1/n)$ (no restriction).
Hence with obvious meaning of the notation
\begin{align*}
F(x)&=g(x)+\frac{1}{n} \sum_{i=0}^{n-1}  f_i(x) &\\
& \in \ [m, m+\tfrac{1}{n})+ \tfrac{1}{n} \big\{ (n-1-j)[0, \tfrac{1}{n}) + [0,1+\tfrac{1}{n})+ j [1,1+\tfrac{1}{n})\big\}\\
& \subset [m+\tfrac{j}{n}, m+\tfrac{1}{n}+\tfrac{1}{n^2}(n-1-j)+\tfrac{1}{n}(1+\tfrac{1}{n})(1+j) )\\
&\subset  [m+\tfrac{j}{n}, m+\tfrac{j+1}{n}+\tfrac{2}{n} )
\end{align*}
which, since $f(x)$ belongs to this same interval, proves that $\vert f(x)-F(x)\vert<3/n$.

Suppose that for some $j\in\{0, 1, \ldots, n\}$, $f(x)=m+\frac{j}{n}$. We have $x\in A_i$ for $j\le i \le n-1$ and $x\in B_i$ for $i\le j-1$, thus
\begin{align*}
F(x)&=g(x)+\frac{1}{n} \sum_{i=0}^{n-1}  f_i(x) \\
& \in \ [m, m+\tfrac{1}{n})+ \tfrac{1}{n} \big\{ (n-j)[0, \tfrac{1}{n})  + j [1,1+\tfrac{1}{n})\big\}\\
& \subset [m+\tfrac{j}{n}, m+\tfrac{1}{n}+\tfrac{1}{n^2}(n-j)+\tfrac{1}{n}(1+\tfrac{1}{n})j )\\
&\subset  [m+\tfrac{j}{n}, m+\tfrac{j+1}{n}+\tfrac{1}{n} )
\end{align*}
which, since $f(x)$ belongs to this same interval, proves that $\vert f(x)-F(x)\vert<2/n<3/n$.
\end{proof}

As a specialization we have the following result that selects some notable functions for $\phi$. Moreover, it builds the closed order directly from $S$, more in the spirit of Bonsall's paper and of algebraic approaches.

\begin{corollary} \label{besm3}
Let $(X,\mathscr{T})$ be a compact space. Let $S$ be a non-empty subset of  the set $C(X,\mathbb{R})$. Suppose
\begin{itemize}
\item[(a)] $S$ is a convex cone: $S+S\subset S$, $\lambda S\subset S$ for every $\lambda >0$,
\item[(b)] One of the following non-excluding possibilities:
\begin{enumerate}
\item $S$ consists of non-negative functions and if  $f\in S$ then $\chi(f):=\frac{2f^2}{1+f^2}$ belongs to $S$,
\item  If $f\in S$ then $\gamma(f):=\Big(\frac{2f}{1+f}\Big)^{\!2}$ is well defined and belongs to $S$.
\item  $S$ consists of positive functions, its contains the positive constants and if $f\in S$ then  both $f^2$ and $-1/f$ belong to $S$. \label{mnnf}
\end{enumerate}
%We have one of the following
%\begin{enumerate}
%\item If $f \in S$  then $f^2\in S$ and $\varphi(f) \in S$ where $\varphi(x):=x/(1+x)$,
%\item If $f\in S$ then $\beta(f)=\frac{f(1+2f+5f^2)}{(1+f)^3}\in S$,
% \item If $f\in S$ then $\gamma(f)=\Big(\frac{2f}{1+f}\Big)^{\!2}\in S$,
%\end{enumerate}
%\item[(iii)] $S$ generates $\le$, that is, $x\le y$ iff $\forall f \in S$, $f(x)\le f(y)$,

\end{itemize}
Then  $\bar S=C^+(X,\le_S,\mathbb{R}, N_S)$.
In particular, if additionally
\begin{itemize}
\item[(c)] $N_S= \emptyset$, that is, for every $x\in X$ there is some $f\in S$ such that $f(x)>0$ (this follows if $S$ contains the identity, e.g.\ in case b3),
\end{itemize}
then  $\bar S=C^+(X,\le_S,\mathbb{R})$.
\end{corollary}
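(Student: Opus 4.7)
The strategy is to apply Theorem \ref{besm2} with the preorder $\le_S$, verifying its hypotheses in each of the three sub-cases of (b). The relation $\le_S$ is closed on $X^2$ as the intersection of the closed sets $\{(x,y): f(x)\le f(y)\}$ over $f\in S$, so $(X,\mathscr{T},\le_S)$ is a compact preordered space; hypothesis (iii) of Theorem \ref{besm2}, $\le=\le_S$, is then automatic, and (i) is exactly (a). It remains only to verify that every $f\in S$ is non-negative and to exhibit a $\phi$ of the required shape such that $\phi(f)\in S$ whenever $f\in S$.

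For (b1) we take $\phi=\chi$, non-negativity being postulated. For (b2), we take $\phi=\gamma$, with non-negativity obtained as in Remark \ref{caew}: since $\lambda f\in S$ for every $\lambda>0$ and $\gamma(\lambda f)$ must be well defined, one has $\lambda f\neq -1$ pointwise for every $\lambda>0$, and hence $f\ge 0$. For (b3), we again take $\phi=\gamma$, with positivity postulated. That $\phi$ has the required shape in (b1) and (b2) follows from the short identities $x-\chi(x)=x(1-x)^2/(1+x^2)$ and $x-\gamma(x)=x(1-x)^2/(1+x)^2$, both visibly non-negative on $[0,+\infty)$ and vanishing only at $x\in\{0,1\}$; continuity and monotonicity of $\chi$ and $\gamma$ on $[0,+\infty)$ are immediate.

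The principal step is the algebraic synthesis of $\gamma$ from the generators in (b3). Given $f\in S$, iteration of the allowed operations (sum, positive scaling, positive constants, reciprocation, squaring) yields successively $1+f$, then $(1+f)/f=1+1/f$, its square $((1+f)/f)^2$, its reciprocal $f^2/(1+f)^2$, and finally $\gamma(f)=4f^2/(1+f)^2\in S$ after scaling by $4$. I expect this to be the only non-routine point of the argument; everything else is a verification.

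With all hypotheses of Theorem \ref{besm2} in hand, one obtains $\bar S=C^+(X,\le_S,\mathbb{R},N_S)$. Under (c), which is automatic in (b3) since the positive constant $1$ lies in $S$, the set $N_S$ is empty and we conclude $\bar S=C^+(X,\le_S,\mathbb{R})$.
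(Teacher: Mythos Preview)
Your overall strategy is the same as the paper's: verify the hypotheses of Theorem~\ref{besm2} for the preorder $\le_S$, choosing $\phi=\chi$ in case~(b1) and $\phi=\gamma$ in cases~(b2) and~(b3). The verifications for (b1) and (b2), including the use of Remark~\ref{caew} for non-negativity and the factorisations $x-\chi(x)=x(1-x)^2/(1+x^2)$, $x-\gamma(x)=x(1-x)^2/(1+x)^2$, are correct and essentially identical to the paper's treatment.

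The gap is in your algebraic chain for (b3). You list ``reciprocation'' among the allowed operations and use it twice, but the hypothesis does not give $f\mapsto 1/f$; it gives $f\mapsto -1/f$. Your second step claims $(1+f)/f=1+1/f\in S$, which would require $1/f\in S$; from the stated hypotheses you only obtain $-1/f\in S$, and adding the constant~$1$ yields $(f-1)/f$, not $(1+f)/f$. Likewise your fourth step (``its reciprocal'') produces $-f^2/(1+f)^2$, not $f^2/(1+f)^2$. Two applications of $g\mapsto -1/g$ return $g$, so the sign cannot be absorbed this way. The paper's manipulation avoids the problem by working with $1+f$ rather than $f$: from $1+f\in S$ one gets $-\,(1+f)^{-1}\in S$, and then
\[
\frac{f}{1+f}=1-\frac{1}{1+f}=1+\bigl(-(1+f)^{-1}\bigr)\in S,
\]
after which squaring and scaling by~$4$ give $\gamma(f)\in S$. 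Replacing your chain by this one repairs the argument.
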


Note that (b)3 implies $N_S=\emptyset$ because $1\in S$.

\begin{proof}
It is clear that $\le_S$ is a closed preorder so $(X,\mathscr{T},\le_S)$ is a compact preordered space. As mentioned in Remark \ref{caew} if there where $f\in S$ such that for some $x$, $f(x)<0$ then for some positive constant $\lambda>0$, $f'=\lambda f$, would be such that $f'(x)=-1$, but then $\gamma(f)$ would not be well defined. We conclude that in case 2 the set $S$ consists of non-negative functions.
We just need to prove $3\Rightarrow 1$. Since $\frac{f}{1+f}=1-(1+f)^{-1}$ and $1+f\in S$, we have $-(1+f)^{-1} \in S$ and so $\frac{f}{1+f}\in S$. Due to the invariance under squaring $\chi(f)\in S$. The functions in $S$ are positive thus they are non-negative and so 1 is proved.
%Of course in case 3, $N_S=\emptyset$.
\end{proof}

%In (b) it is understood that the non-obvious fact that $\varphi$ is well defined on $S$ is part of the assumption.

%Consider also the functions $\chi(x)=x-\frac{x^2(1-x)^2}{(1+x)^3}=\frac{x(1+2x+5x^2)}{(1+x)^3}$ and $\rho(x)=x-\frac{x(1-x)^2}{(1+x)^2} =\left(\frac{2x}{1+x}\right)^2$, which have the property that (a) have a singularity on $-1$, for $x\ge 0$ they are less than $x$ with equality just in 0 and 1. Moreover, they are increasing on $[0,+\infty)$ and asymptotic for $x\to +\infty$ to 5 and 4, respectively. Function $\chi$ is also increasing on $(-1,0]$.

We have already observed (Remark \ref{ntyu}) that Theorem \ref{besm2} implies Bonsall's.
Let us prove that Besnard's theorem also follows.

\begin{proposition}
Theorem \ref{besm2} implies Besnard's  Theorem \ref{besn}.
\end{proposition}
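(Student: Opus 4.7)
The plan is to extract from Besnard's closure hypothesis a non-negative convex cone that fits the hypotheses of Theorem \ref{besm2}, apply that theorem, and then shift by a constant to cover all (not necessarily non-negative) isotone functions. First I would unpack the closure property of $S$ under $f\mapsto h(f)$ for continuous non-decreasing piecewise linear $h:\mathbb{R}\to\mathbb{R}$: taking $h$ constant shows that every real constant belongs to $S$, and taking $h(x)=\lambda x$ with $\lambda\ge 0$ combined with closure under sums shows that $S$ is a convex cone. Then I would fix a concrete piecewise linear $\phi:[0,+\infty)\to[0,+\infty)$ meeting the hypothesis of Theorem \ref{besm2}, e.g.\ the one linear between the nodes $(0,0)$, $(1/2,1/4)$, $(1,1)$ and with slope $1/2$ on $[1,+\infty)$; one checks $\phi(x)\le x$ with equality iff $x\in\{0,1\}$. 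Extending $\phi$ to all of $\mathbb{R}$ by $\phi(x)=0$ for $x<0$ keeps it continuous, non-decreasing, and piecewise linear, hence an admissible $h$ in Besnard's hypothesis.

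Next I would introduce $S^{+}:=S\cap C^{+}(X,\le,\mathbb{R})$ and verify the hypotheses of Theorem \ref{besm2} for it. Condition (i) is inherited from $S$; (ii) holds because for $f\in S^{+}$ one has $\phi(f)\ge 0$ and $\phi(f)\in S$ by Besnard's closure property, so $\phi(f)\in S^{+}$; (iii) amounts to showing $\le_{S^{+}}=\le$, and the nontrivial inclusion $\le_{S^{+}}\,\subset\,\le_{S}$ follows from the observation that, since every real constant lies in $S$, any $f\in S$ can be translated by a sufficiently large constant into an element of $S^{+}$ inducing the same order. Moreover $1\in S^{+}$, so $N_{S^{+}}=\emptyset$, and Theorem \ref{besm2} yields $\overline{S^{+}}=C^{+}(X,\le,\mathbb{R})$.

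Finally, for arbitrary $f\in C(X,\le,\mathbb{R})$ I would pick $c\ge 0$ with $f+c\ge 0$ on the compact $X$, approximate $f+c\in C^{+}(X,\le,\mathbb{R})$ uniformly by some $g_n\in S^{+}$, and note that $g_n-c\in S$ (since $-c\in S$ and $S$ is closed under sum), whence $g_n-c\to f$ uniformly; the reverse inclusion $\bar S\subset C(X,\le,\mathbb{R})$ is automatic from $S\subset C(X,\le,\mathbb{R})$ and the uniform closedness of the latter. I do not foresee any serious obstacle; the only slightly delicate point is ensuring that the chosen $\phi$ simultaneously qualifies as the $\phi$ of Theorem \ref{besm2} and as an admissible $h$ in Besnard's hypothesis, which is handled by the trivial extension by zero on the negative half-line.
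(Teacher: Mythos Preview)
Your proof is correct. Both you and the paper begin by observing that constants belong to $S$, that $S$ is a convex cone, and that a piecewise linear $\phi$ satisfying the hypotheses of Theorem \ref{besm2} can be extended by zero on the negative half-line so as to remain an admissible $h$; from this, both obtain $\overline{S^{+}}=C^{+}(X,\le,\mathbb{R})$ via Theorem \ref{besm2}. The difference lies in the passage from $C^{+}(X,\le,\mathbb{R})$ to $C(X,\le,\mathbb{R})$.

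The paper introduces $S^{+}=p_{+}(S)$ and $S^{-}=p_{-}(S)$ via the projections $p_{\pm}(x)=\max\{x,0\},\min\{x,0\}$, applies Theorem \ref{besm2} a second time to $-S^{-}$ with the \emph{reversed} order to obtain $\overline{S^{-}}=C^{-}(X,\le,\mathbb{R})$, and then concludes by the additive decomposition $\overline{S}\supset\overline{S^{+}}+\overline{S^{-}}=C^{+}(X,\le,\mathbb{R})+C^{-}(X,\le,\mathbb{R})=C(X,\le,\mathbb{R})$. Your route is more economical: having already noted that every real constant lies in $S$, you simply translate an arbitrary $f\in C(X,\le,\mathbb{R})$ by a large constant into $C^{+}(X,\le,\mathbb{R})$, approximate there, and translate back. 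This avoids the reversed-order argument entirely and invokes Theorem \ref{besm2} only once. Your $S^{+}=S\cap C^{+}(X,\le,\mathbb{R})$ in fact coincides with the paper's $p_{+}(S)$, since $p_{+}$ is an admissible $h$; but you also verify explicitly that $\le_{S^{+}}=\le$ and $N_{S^{+}}=\emptyset$, points the paper glosses over.
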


\begin{proof}
Under the assumption of Besnard's theorem, consider the projections $p_+(x)=\max\{x,0\}$ and $p_-=\min\{x,0\}$, and define $S^+=p_+(S)$, $S^-=p_-(S)$. The functions $p_+, p_-:\mathbb{R} \to \mathbb{R}$ are piecewise linear non-decreasing functions, thus $p_+(f),p_-(f)\in S$ for $f\in S$.
The set $S^+$ is closed under sum, because of $0\le p_+(f)+p_+(g)\in S$, thus $p_+(f)+p_+(g)=p_+(p_+(f)+p_+(g))\in S_+$. Moreover, for $a\ge 0$, $p_+(af)=ap_+(f)$ thus $S_+$ is a convex cone. But we can find a piecewise linear function $ \phi: [0,\infty)\to [0,\infty)$ which respects the conditions of Theorem \ref{besm2}, and extending it to a function $\check \phi$ equal to zero  for $x<0$, we have that $\check \phi$ is piecewise linear and so for $f\in S$, $\phi(p_+(f))=\check\phi(f)=p_+(\check \phi(f))\in S^+$ which means that $S^+$ is invariant under $\phi$. Thus, by Theorem \ref{besm2} $\overline{S^+}=C^+(X,\le,\mathbb{R})$. Now consider $S^-$, then $-S^-\subset C^+(X,\ge,\mathbb{R})$ where $\ge$ is the transposed order. Arguing as above we get $\overline{-S^-}=C^+(X,\ge,\mathbb{R})$ and so $\overline{S^-}=-\overline{-S^-}=-C^+(X,\ge,\mathbb{R})=C^-(X,\le,\mathbb{R})$.
 Finally,
\[
\overline{S}\supset \overline{S^+}+\overline{S^-}={C^+(X,\le,\mathbb{R})}+{C^-(X,\le,\mathbb{R})}={C(X,\le,\mathbb{R})},
\]
The other inclusion being clear.
\end{proof}

The following result does not follow from Bonsall's theorem since he requires that $S$ is uniformly closed from the outset.
\begin{theorem} \label{cptr}
Let $\mathbb{R}^n$ be endowed with the Cartesian  product order: $x\le y$ iff $x_k\le y_k$ for $k=1,\ldots, n$. Let $X\subset  \mathbb{R}^n$ be a compact subset endowed with the induced topology and order (denoted in the same way).
Let $S$ be the family of positive isotone rational functions $f:=p(x)/q(x)$  on $X$ such that (i) the coefficients of $p$ and $q$ are non-negative, (ii) $\textrm{deg}(p) = \textrm{deg}(q)$.
 %(iii) $q$ is a sum of two squares of polynomials (one can vanish).
 Then $\bar S=C^+(X,\le,\mathbb{R})$.
\end{theorem}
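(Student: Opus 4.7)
The plan is to verify the hypotheses of Corollary \ref{besm3} in case (b1), with $\chi(x)=2x^2/(1+x^2)$, and then to identify $\le_S$ with the Cartesian product order $\le$ and to check $N_S=\emptyset$. Given that machinery, the conclusion $\bar S=C^+(X,\le,\mathbb{R})$ will follow at once.

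For the convex cone condition, if $f=p/q$ and $g=p'/q'$ lie in $S$, I write $f+g=(pq'+p'q)/(qq')$. All coefficients remain non-negative, and the equalities $\deg p=\deg q$, $\deg p'=\deg q'$ force $\deg(pq'+p'q)=\deg(qq')$; crucially, no cancellation can destroy this since the leading homogeneous parts are non-negative combinations of monomials. Positivity and isotonicity on $X$ are preserved under sum and positive scaling. For closure under $\chi$, I compute
\[
\chi(f)=\frac{2p^2}{p^2+q^2},
\]
whose numerator and denominator are polynomials with non-negative coefficients and common total degree $2\deg p=2\deg q$; since $\chi$ is non-decreasing on $[0,\infty)$ and $f>0$ on $X$, $\chi(f)$ is positive and isotone, hence $\chi(f)\in S$.

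To prove $\le_S\,=\,\le$, the inclusion $\le\,\subset\,\le_S$ is immediate from the isotonicity of elements of $S$. For the reverse, I fix $R>0$ large enough that $x_k+R>0$ on $X$ for every $k=1,\ldots,n$, and introduce
\[
g_k(x):=\frac{x_k+R}{x_k+R+1},\qquad k=1,\ldots,n.
\]
Each $g_k$ is in $S$: numerator and denominator both have total degree $1$ and non-negative coefficients, the value lies in $(0,1)$ on $X$, and $g_k$ depends only on the single variable $x_k$ in a strictly increasing fashion, hence is isotone with respect to the product order. If $x\not\le y$, some coordinate satisfies $x_k>y_k$, so $g_k(x)>g_k(y)$ and $x\not\le_S y$. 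Finally, $N_S=\emptyset$ holds trivially because every $f\in S$ is strictly positive on $X$, and Corollary \ref{besm3} delivers $\bar S=C^+(X,\le_S,\mathbb{R},N_S)=C^+(X,\le,\mathbb{R})$.

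The main obstacle is the equal-degree constraint $\deg p=\deg q$, which rules out the obvious candidates like $x_k+R$ from $S$ and forces one to exhibit bounded rational separating functions; once the family $\{g_k\}$ is in hand, and the degree-matching is checked for $\chi(f)$, the remaining verifications are mechanical.
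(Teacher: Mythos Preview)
Your proof is correct and follows essentially the same route as the paper: you invoke Corollary~\ref{besm3} in case (b1), verify the convex-cone and $\chi$-stability conditions with the same degree-matching arguments, and use the same separating functions $g_k(x)=(x_k+R)/(x_k+R+1)$ to identify $\le_S$ with the product order. The only cosmetic difference is that the paper notes $N_S=\emptyset$ via $1\in S$, whereas you observe directly that all elements of $S$ are strictly positive; both are immediate.
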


%The positivity condition is required on $X$. The isotone condition on $p/q$ might be imposed on $X$ or on the whole $\mathbb{R}^n$ indifferently, leading to two versions of the theorem (if we impose it on $\mathbb{R}^n$ the family $S$ is smaller and so the theorem is stronger though a bit less elegant).

Conditions (ii)  can be dropped but its presence makes the theorem somewhat stronger.

\begin{remark}
For $n=1$, the theorem states  that every non-negative continuous isotone function on $[a,b]\subset \mathbb{R}$ can be uniformly approximated on $[a,b]$ by rational functions $p(x)/q(x)$ that are positive, isotone  and with non-negative coefficients.
%For $n=1$ and the discrete order, the theorem states that every non-negative continuous function on $[a,b]\subset \mathbb{R}$ can be uniformly approximated on $[a,b]$ by rational functions $p(x)/q(x)$ that are positive, isotone  and with non-negative coefficients.
\end{remark}
\begin{proof}
To start with $\le \subset \le_S$ because all the elements of $S$ are isotone. Let $x\le_S y$, we want to prove that $x\le y$ and hence $\le =\le_S$ due to the arbitrariness of $x$ and $y$. If not $y_k<x_k$ for some $k$. Now consider the function $g: X \to \mathbb{R}$,  $z \mapsto g(z)=\frac{z_k+c}{z_k+c+1}$, $c>0$. Since $z_k$ is continuous over $X$ it reaches a minimum and so $c>0$ can be chosen so that $z_k+c$ is positive on $X$. With this choice $g\in S$ thus $x \le_S y$ implies $g(x)\le g(y)$ which reads $x_k\le y_k$, a contradiction. We proved $\le =\le_S$.

Let us prove that $S$ satisfies (a), (b)1, (c) of Cor.\ \ref{besm3}. Condition (a) is satisfied because the product or sum of two polynomials with non-negative coefficients gives a polynomial with non-negative coefficients. %Note that the $\frac{p_1}{q_1}+\frac{p_2}{q_2}=\frac{p_1 q_2+q_1p_2}{q_1q_2}$ and if $q_1=a^2+b^2$ and $q_2=c^2+d^2$, $q_1q_2=(ac+bd)^2+(ad-bc)^2$, thus the new denominator is the sum of squares.
The condition on the degrees is easily proved noting that  $\frac{p_1}{q_1}+\frac{p_2}{q_2}=\frac{p_1 q_2+q_1p_2}{q_1q_2}$ and $\textrm{deg}(p_1)=\textrm{deg}(q_1)$, $\textrm{deg}(p_2)=\textrm{deg}(q_2)$ so both terms in the numerator have the same degree of the denominator. The degree cannot get reduced by the sum on the numerator because, even if the highest degree terms of two addenda $p_1q_2$, $q_1 p_2$ involve the same variables, the sum cannot vanish as the coefficients of the polynomials are non-negative.

%can be equivalently expressed by an asymptotic condition for $x\to +\infty$, where  $f\to L>0$. The sum of two functions with this behavior preserves it.

%(Condition (ii) is what spoils $S \cup\{0\}$ from being a semi-algebra with unit.)
Condition (b)1 follows from the fact that the composition of isotone functions is  isotone so if $f\in S$, $\chi(f)=2f^2/(1+f^2)$ is also isotone. Note that since $f$ is positive $\chi(f)$ is positive, and also if $f=p/q$ since $\chi(f)=\frac{2p^2}{p^2+q^2}$ it is also rational with non-negative coefficients.
% and with a denominator which is a sum of squares of polynomials.
Moreover since $\deg(p)=\deg(q)$, $\deg(p^2)=\deg(p^2+q^2)$, by the same argument on the non-negativity of the coefficients used above. In conclusion, $\chi(f)\in S$.
Condition (c), $N_S=\emptyset$, follows because $1\in S$.

We conclude from Cor.\ \ref{besm3} that $\bar S=C^+(X,\le,\mathbb{R})$.
\end{proof}

In the literature the subject of approximation of continuous isotone functions by isotone polynomials is well explored \cite{lorentz68} \cite [Chap.\ 10,
Thm.\ 3.3(i)-(ii)]{devore93}. In those studies the polynomial can have negative coefficients while in our version we use rational functions but restrict the coefficients to the non-negative real numbers.

\begin{theorem}
 Let $X\subset  \mathbb{R}^n$ be a compact subset endowed with the induced topology.
Let $S$ be the family of positive  rational functions $f:=p(x)/q(x)$  on $X$ such that  the coefficients of $p$ and $q$ are non-negative.
 %(iii) $q$ is a sum of two squares of polynomials (one can vanish).
 Then $\bar S=C^+(X,\mathbb{R})$.
\end{theorem}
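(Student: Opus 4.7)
The plan is to invoke Corollary \ref{besm3} with the closed preorder $\le_S$ generated by $S$ itself, and to show that $S$ is rich enough to collapse $\le_S$ to the diagonal $\Delta=\{(x,x):x\in X\}$. Once that is achieved, $C^+(X,\le_S,\mathbb{R})$ coincides with all non-negative continuous functions on $X$ and the corollary gives the desired conclusion.

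First I would verify the algebraic hypotheses (a), (b)1, and (c) of Corollary \ref{besm3}. Condition (a) is immediate from the common-denominator identity $(p_1/q_1)+(p_2/q_2)=(p_1 q_2+p_2 q_1)/(q_1 q_2)$: non-negativity of coefficients and positivity on $X$ are preserved by sums and by positive scalings, just as in the proof of Theorem \ref{cptr}. For (b)1, each $f\in S$ is non-negative by hypothesis, and for $f=p/q$ the element $\chi(f)=2f^2/(1+f^2)=2p^2/(p^2+q^2)$ has non-negative coefficients in both numerator and denominator, is well defined (since $q\ne 0$ on $X$, forcing $p^2+q^2>0$), and is positive on $X$ because $f>0$; thus $\chi(f)\in S$. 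Condition (c) holds trivially since $1\in S$, so $N_S=\emptyset$.

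The decisive step is to prove $\le_S=\Delta$. Given distinct $x,y\in X$, pick an index $k$ with $x_k\ne y_k$, and use the compactness of $X$ to choose $c>-\min_{z\in X}z_k$, so that the shifted coordinate $z_k+c$ is strictly positive on $X$. Then the polynomial $\ell(z):=z_k+c$ and its reciprocal $r(z):=1/(z_k+c)$ both lie in $S$ (each is a ratio of non-negative-coefficient polynomials that is positive on $X$), and they are strictly monotone in opposite directions in the $k$-th coordinate. If $x_k>y_k$ then $\ell(x)>\ell(y)$ witnesses $x\not\le_S y$, while $r(y)>r(x)$ witnesses $y\not\le_S x$; the case $x_k<y_k$ is symmetric. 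Hence no non-reflexive pair belongs to $\le_S$, i.e.\ $\le_S=\Delta$.

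With $\le_S=\Delta$ and $N_S=\emptyset$, Corollary \ref{besm3} yields $\bar S=C^+(X,\le_S,\mathbb{R})=C^+(X,\mathbb{R})$. I do not expect any genuine obstacle: the only non-routine ingredient is the construction of the opposed separators $\ell$ and $r$, which relies on the mild compactness-based tuning of the shift constant $c$.
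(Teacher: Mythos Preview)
Your proposal is correct and follows essentially the same route as the paper: verify (a), (b)1, (c) of Corollary \ref{besm3} and collapse $\le_S$ to the diagonal via an increasing and a decreasing coordinate separator (the paper uses $(z_k+c)/(z_k+c+1)$ and $1/(z_k+c)$, where you use $z_k+c$ and $1/(z_k+c)$). One small caution: your constraint $c>-\min_{z\in X}z_k$ does not by itself force $c\ge 0$ when $\min_{z\in X} z_k>0$, so to guarantee that $\ell(z)=z_k+c$ has non-negative coefficients you should also require $c>0$.
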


\begin{proof}
Let $\le$ be the discrete order on $X$. It is closed because the induced topology on $X$ is Hausdorff. Every function is isotone with respect to the discrete order, thus $\le \subset \le_S$. Let $x\le_S y$, we want to prove that $x= y$ and hence $\le =\le_S$ due to the arbitrariness of $x$ and $y$. If not $x_k\ne y_k$ for some $k$. Now, consider the functions $g,h: X \to \mathbb{R}$,  $z \mapsto g(z)=\frac{z_k+c}{z_k+c+1}$, $c>0$ and $z \mapsto h(z)=\frac{1}{z_k+c}$. Since $z_k$ is continuous over $X$ it reaches a minimum and so $c>0$ can be chosen so that $z_k+c$ is positive on $X$. With this choice $g,h\in S$ thus $x \le_S y$ implies $g(x)\le g(y)$ and $h(x)\le h(y)$ which read $x_k\le y_k$ and $y_k\le x_k$, respectively, hence $x_k=y_k$, a contradiction. We proved $\le =\le_S$.

Let us prove that $S$ satisfies (a), (b)1, (c) of Cor.\ \ref{besm3}. Condition (a) is satisfied because the product or sum of two polynomials with non-negative coefficients gives a polynomial with non-negative coefficients.

Condition (b)1 follows since if $f$ is positive $\chi(f)$ is positive, and also if $f=p/q$ since $\chi(f)=\frac{2p^2}{p^2+q^2}$ it is also rational with non-negative coefficients. In conclusion, $\chi(f)\in S$.
Condition (c), $N_S=\emptyset$, follows because $1\in S$.

We conclude from Cor.\ \ref{besm3} that $\bar S=C^+(X,\mathbb{R})$.
\end{proof}

\section{Algebraic characterization of $C^+(X,\le,\mathbb{R})$} \label{secalg}
%the spaces of continuous isotone functions via their algebraic properties}

On a compact preordered space $(X, \mathscr{T},\le)$ the family of differences of continuous isotone functions
\[
C_{BV}(X,\le,\mathbb{R}):=\{f-g, \ f,g \in C(X,\le, \mathbb{R})\}=C(X,\le,\mathbb{R})-C(X,\le,\mathbb{R})
\]
 coincides with the family of differences of non-negative continuous isotone functions $C_{BV}(X,\le,\mathbb{R})=C^+(X,\le,\mathbb{R})-C^+(X,\le,\mathbb{R})$, indeed it is sufficient to add to both $f$ and $g$ the same sufficiently large constant, to make them non-negative.

\begin{proposition} \label{vks}
Let $(X,\mathscr{T},\le)$ be a compact ordered space.
The set $C_{BV}(X,\le,\mathbb{R})$ is an algebra.
Moreover, we have the identity $\overline{C_{BV}(X,\le,\mathbb{R})}=C(X,\mathbb{R})$, namely every continuous function can be uniformly approximated by differences of continuous isotone functions.
\end{proposition}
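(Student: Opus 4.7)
The plan is to establish the algebraic structure directly from the representation $C_{BV}(X,\le,\mathbb{R})=C^+(X,\le,\mathbb{R})-C^+(X,\le,\mathbb{R})$ recalled just before the statement, and then to derive the density claim from the classical Stone--Weierstrass theorem, since the order assumption guarantees that $X$ is compact Hausdorff.

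For the algebra part I would verify closure under each operation working with non-negative representatives $f_i,g_i\in C^+(X,\le,\mathbb{R})$. Sums are immediate from $(f_1-g_1)+(f_2-g_2)=(f_1+f_2)-(g_1+g_2)$, and scalar multiplication by $\lambda<0$ is handled by swapping roles: $\lambda(f-g)=(-\lambda)g-(-\lambda)f$. For products I would invoke the identity
\[
(f_1-g_1)(f_2-g_2)=(f_1f_2+g_1g_2)-(f_1g_2+g_1f_2),
\]
after first observing that the pointwise product of two non-negative continuous isotone functions is again non-negative continuous isotone (from $f(x)g(x)\le f(y)g(x)\le f(y)g(y)$ whenever $x\le y$ and $f,g\ge 0$ are isotone). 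Both brackets on the right then lie in $C^+(X,\le,\mathbb{R})$, placing the product in $C_{BV}(X,\le,\mathbb{R})$.

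For density I would apply Stone--Weierstrass on the compact Hausdorff space $X$. The subalgebra $C_{BV}(X,\le,\mathbb{R})$ contains every real constant (each constant is isotone and can be written as $c-0$). To verify point separation, given $x\ne y$, antisymmetry of $\le$ forces $x\nleq y$ or $y\nleq x$; in either case the identity $\le=\le_{C(X,\le,\mathbb{R})}$ (valid because every compact preordered space is normally preordered, as recalled in the introduction) supplies an $f\in C(X,\le,\mathbb{R})\subset C_{BV}(X,\le,\mathbb{R})$ with $f(x)\ne f(y)$. Stone--Weierstrass then yields $\overline{C_{BV}(X,\le,\mathbb{R})}=C(X,\mathbb{R})$.

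I do not anticipate a substantive obstacle: the only non-automatic step in the algebra claim is recording that non-negativity is precisely what makes a product of isotone functions isotone, which is why one writes $C_{BV}$ as $C^+-C^+$ from the outset; and the only non-automatic step in the density claim is point separation, which is where the antisymmetry hypothesis (upgrading ``compact preordered'' to ``compact ordered'') enters in an essential way. Note that Theorem \ref{besm2} is not needed here, consistent with the fact that the target space $C(X,\mathbb{R})$ no longer carries any monotonicity constraint.
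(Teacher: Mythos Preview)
Your proposal is correct and follows essentially the same route as the paper: both use the product identity $(f_1-g_1)(f_2-g_2)=(f_1f_2+g_1g_2)-(f_1g_2+g_1f_2)$ together with the fact that $C^+(X,\le,\mathbb{R})$ is closed under products, and both conclude density via Stone--Weierstrass after noting that constants lie in $C_{BV}$ and that continuous isotone functions separate points of a compact ordered space. The only cosmetic difference is that the paper cites Nachbin's ``complete order regularity'' for point separation where you invoke the equivalent identity $\le=\le_{C(X,\le,\mathbb{R})}$ from the introduction.
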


The proof follows from the fact that the product of two non-negative continuous isotone functions is again non-negative continuous and isotone,
\[
C^+(X,\le,\mathbb{R}) \,C^+(X,\le,\mathbb{R}) \subset C^+(X,\le,\mathbb{R}).
\]
\begin{proof}
The set $C_{BV}(x,\le,\mathbb{R})$ is an algebra, indeed if $f_1,f_2,g_1,g_2\in C^+(X,\le,\mathbb{R})$
\[
(f_1-g_1) (f_2-g_2)=(f_2f_2+g_2g_2)-(g_1 f_2+f_1 g_2)
\]
where the functions in parethesis on the right-hand side belong to $C^+(X,\le,\mathbb{R})$. Since $C(X,\le,\mathbb{R})$ includes the constants and separates points (by complete order regularity \cite{nachbin65}), so does $C_{BV}(x,\le,\mathbb{R})$. The claim now follows from Stone Theorem \cite{stone48} \cite{willard70}.
\end{proof}

The following result establishes an algebraic
characterization of $C^+(X,\le,\mathbb{R})$. A result of the same type but demanding stability under the whole family of non-decreasing continuous functions and focused on $C(X,\le,\mathbb{R})$ can be found in Besnard \cite[Thm.\ 4]{besnard15} \cite[Thm.\ 1]{besnard15b} (improving a previous approach by the same author \cite{besnard09} that also demanded for stability under lattice operations). The advantage of working with $C^+(X,\le,\mathbb{R})$ instead of $C(X,\le,\mathbb{R})$ is that the former set constitutes a semi-algebra.

An involution on an algebra $A$ over $\mathbb{C}$ is a conjugate linear map $f \to f^*$, that is $(f+a g)^*=f^*+\bar a g^*$ for $a\in \mathbb{C}$, such that $f^{**}=f$ and $(fg)^*=g^* f^*$. The pair $(A,*)$ is called a $*$-algebra. An element is {\em self-adjoint}  or {\em hermitian} if $f=f^*$. For each $f\in A$ there exist unique hermitian elements $g,h\in A$ such that $f=g+i h$.

 The element $f\in A$ is {\em normal} if $f f^*=f^*f$ in which case it generates a commutative *-algebra. If $A$ is unital $1=1^*$ because $1^*=1 (1^*)=(1(1^*))^*=((1^*))^*=1$.
A homomorphism of *-algebras $\varphi:A\to B$, that preserves adjoints $\varphi(f^*)=\varphi(f)^*$ is a *-homomorphism. If it is bijective,  it is a *-isomorphism.

A Banach *-algebra $B$ is a *-algebra with a complete submultiplicative norm such that $\Vert f^*\vert = \Vert f\Vert$. If it has unit such that $\Vert e\Vert=1$ we call it a unital  Banach *-algebra $B$. A $C^*$-algebra is a   Banach *-algebra $B$ such that
\[
\Vert f^*f\Vert=\Vert f\Vert^2.
\]
A bijective *-homomorphism $\varphi:A\to B$ such that $\Vert\varphi(f)\Vert=\Vert f \Vert$ is a *-isometry.

\begin{theorem} \label{cpp}
Let $B$ be a commutative  unital complex $C^*$-algebra. Let $H$ be the subset of hermitian elements, and  assume there is a subset $C\subset B$ with the following properties:
\begin{itemize}
\item[(i)] $C + C \subset C$, and $a C \subset C$  for every $a > 0$,
\item[(ii)] $C$ is  closed,
\item[(iii)] $\overline{C-C}=H$,
\item[(iv)]  We have one of the following
\begin{enumerate}
\item $C$ consists of non-negative elements (non-negative spectrum) and if  $f\in C$ then $\chi(f):=\frac{2f^2}{1+f^2}$ belongs to $C$,
\item  If $f\in C$ then $\gamma(f):=\Big(\frac{2f}{1+f}\Big)^{\!2}$ is well defined and belongs to $C$.
%\item     $C$ consists of positive elements (positive spectrum), contains the unity, and if  $f\in C$ then $f^2$ and $-1/f$ belong to $C$.
\end{enumerate}
\end{itemize}
Let $X$ be the set of characters. Endow $X$ with a the weak*-topology $\mathscr{T}$ and with the preorder $x\le y$ if $x(f)\le y(f)$ for every $f\in C$. Then $(X,\mathscr{T},\le)$ is a compact ordered space.
Moreover, the Gelfand map $\mathcal{G}:B \to C(X, \mathbb{C})$, $f \mapsto \hat f$, $\hat f(x):=x(f)$, is a bijective unit-preserving  *-isometry  which maps bijectively $C$ to $C^+(X,\le,\mathbb{R})$.
\end{theorem}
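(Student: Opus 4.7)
The plan is to reduce the theorem to Corollary \ref{besm3} via the classical Gelfand--Naimark representation. The heart of the argument is transporting conditions (i)--(iv) across the Gelfand map and exploiting (iii) both to make $\le$ antisymmetric and to force $N_{\mathcal{G}(C)}=\emptyset$.

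First, by the Gelfand--Naimark theorem for commutative unital complex $C^*$-algebras, $X$ equipped with the weak*-topology is compact Hausdorff, and the Gelfand map $\mathcal{G}\colon B\to C(X,\mathbb{C})$ is a unit-preserving *-isometric isomorphism; in particular it maps $H$ bijectively onto $C(X,\mathbb{R})$ and intertwines the continuous functional calculus in $B$ with pointwise composition in $C(X,\mathbb{C})$. Second, I would verify that $\le$ is a closed order on $X$: closedness is immediate since $\le=\bigcap_{f\in C}\{(x,y):\hat f(x)\le\hat f(y)\}$ with each $\hat f$ continuous; antisymmetry uses (iii), for if $x\le y$ and $y\le x$ then $x$ and $y$ coincide on $C$, hence on $C-C$ by linearity, hence on $H=\overline{C-C}$ by continuity of characters, and finally on $B=H+iH$.

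Third, set $S:=\mathcal{G}(C)\subset C(X,\mathbb{R})$ (the inclusion into real-valued functions holds in case (iv).1 because positive elements are hermitian, and in case (iv).2 by the algebra analogue of Remark \ref{caew}: if some $f\in C$ were non-hermitian, a rescaling $\lambda f$ would have $-1$ in its spectrum and $\gamma(\lambda f)$ would fail to be well-defined). I then check the hypotheses of Corollary \ref{besm3}: (a) $S$ is a convex cone by linearity of $\mathcal{G}$; (b) the identities $\mathcal{G}(\chi(f))=\chi(\hat f)$ and $\mathcal{G}(\gamma(f))=\gamma(\hat f)$ follow from the functional calculus, using that $1+f^2$ is invertible in case 1 (its spectrum lies in $[1,\infty)$) and that well-definedness of $\gamma(f)$ in $B$ is equivalent via Gelfand to $\hat f$ avoiding $-1$; (c) $N_S=\emptyset$, for if some character $x\in X$ vanished on all of $C$, then by linearity and continuity it would vanish on $\overline{C-C}=H$, hence on $B$, contradicting $x(1)=1$.

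Fourth, by construction $\le_S=\le$, so Corollary \ref{besm3} gives $\bar S=C^+(X,\le,\mathbb{R})$. Since $C$ is closed in $B$ by (ii) and $\mathcal{G}$ is an isometry, $S$ is closed in $C(X,\mathbb{C})$, so $S=\bar S=C^+(X,\le,\mathbb{R})$, yielding the claimed bijection $\mathcal{G}|_C\colon C\to C^+(X,\le,\mathbb{R})$. The main obstacle I expect is the bookkeeping in step three: verifying that the nonlinear rational operations $\chi,\gamma$ defined in $B$ by functional calculus correspond, under $\mathcal{G}$, to the same pointwise operations on $C(X,\mathbb{R})$, and in case 2 extracting hermiticity of $C$ from the mere well-definedness of $\gamma$. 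The remaining ingredients -- compactness of $X$, isometry of $\mathcal{G}$, and the density assumption (iii) -- then feed cleanly into antisymmetry and $N_S=\emptyset$.
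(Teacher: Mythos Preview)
Your overall strategy coincides with the paper's: transport $C$ to $S=\mathcal{G}(C)$ via Gelfand--Naimark, verify the hypotheses of Corollary~\ref{besm3} (the paper invokes Theorem~\ref{besm2} directly, which amounts to the same thing), use (iii) for antisymmetry and for $N_S=\emptyset$, and use (ii) plus the isometry to pass from $\bar S$ to $S$.

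There is one genuine gap, precisely where you anticipated trouble. In case (iv)2 you argue that if some $f\in C$ were non-hermitian, a rescaling $\lambda f$ would have $-1$ in its spectrum. This is false: take $B=\mathbb{C}$ and $f=i$; then $\lambda f$ has spectrum $\{\lambda i\}$ for $\lambda>0$, never meeting $-1$, yet $f$ is not hermitian. Well-definedness of $\gamma$ on the cone $C$ only tells you that the spectrum of each $f\in C$ avoids the negative real axis, which does not by itself force the spectrum to be real.

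The fix is immediate and bypasses (iv) entirely: hermiticity of $C$ follows from (i) and (iii). For $f\in C$ one has $2f=f+f\in C$ by (i), hence $f=2f-f\in C-C\subset\overline{C-C}=H$. With $C\subset H$ in hand, your case~(iv)2 argument (now applied to a hermitian $f$) correctly yields $\sigma(f)\subset[0,\infty)$, and the rest of your proof goes through unchanged. The paper takes exactly this route, tacitly using $C\subset H$ when it asserts that $\hat f$ is real-valued for $f\in C$.
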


Observe that properties $(iv)1$ and $(iv)2$ are satisfied if the maps $f \mapsto f^2$ and  $f\mapsto \frac{1}{1+f}$ send $C$ to itself, where the former condition $C^2\subset C$ would follow if $C$ were a semi-algebra. Note also that we do not assume that the identity is in $C$,  that $C$ is a semi-algebra, or that it is stable under lattice operations. However, these properties are recovered from the proved isometry and the properties of $C^+(X,\le,\mathbb{R})$.
\begin{remark}
Of course, a more general version of the theorem holds with (iv) involving stability under a function $\phi$ with the properties of Theorem \ref{besm2}. Here we just wanted to give a more concrete realization of the result.
\end{remark}
\begin{proof}
From Banach-Alaoglu the topology $\mathscr{T}$ of $X$ is Hausdorff and compact. Since for $f\in C$ the function $\hat f: X\to \mathbb{R}$ is real \cite[Thm.\ 2.1.9]{murphy90} and continuous the preorder $\le$ is closed. Suppose $x\le y$ and $y\le x$, then for every $f\in C$,  $x(f)\le y(f)$  and  $y(f)\le x(f)$, that is $(y-x)\vert_C=0$. But since $y-x:B\to \mathbb{C}$ is real-linear $(y-x)\vert_{C-C}=0$ and since it is continuous, by (iii), $(y-x)\vert_H=0$. Again by the complex-linearity $y-x=0$. This proves that $\le$ is an order and $(X,\mathscr{T},\le)$ is a compact ordered space.

By the Gelfand-Naimark theorem there is a bijective *-isometry $\mathcal{G}:B \to C(X, \mathbb{C})$, thus we need only to show that $\mathcal{G}(C)$ coincides with $C^+(X,\le,\mathbb{R})$.  Since $\mathcal{G}$ is linear if $\hat f,\hat g\in \mathcal{G}(C)$, $a>0$, by (i) $a \hat f\in  \mathcal{G}(C)$ and   $\hat f+\hat g\in \mathcal{G}(C)$.

%Since $\mathcal{G}$ is productive if $\hat f, \hat g\in \mathcal{G}(C)$, by (iv)  $\hat f \hat g\in \mathcal{G}(C)$.
%This proves that $\mathcal{G}(C)$ is a semi-algebra of $C(X,\mathbb{R})$ in the sense of Bonsall.
Since $\mathcal{G}$ is an isometry, if $\hat f_k \to f$ uniformly in $C(X,\mathbb{R})$, then regarding $f_k, f$ as functions in $C(X,\mathbb{C})$ we have $\hat f_k \to f=\hat h$ uniformly, where $f$ belonging to $C(X,\mathbb{C})$  is really the image of some $h\in B$. Since $\mathcal{G}$ is an isometry $f_k\to h$, and since by (ii) $C$ is closed, $h\in C$, which implies $f\in \mathcal{G}(C)$, that is $\mathcal{G}(C)$ is closed.

Let $x\in X$ and suppose that $\hat f(x)=0$ for every  $\hat f\in \mathcal{G}(C)$, then $x(f)=0$ for every $f\in C$, hence $x(g)=0$ for every $g\in C-C$ and finally, since $x$ is continuous, $x\vert_H=0$ by (iii), $H=\overline{C-C}$, and finally $x=0$ by the fact that every element of $B$ can be written in the form $b=h_1+i h_2$ for $h_1,h_2\in H$. This is a contradiction since characters are non-zero by definition, thus we conclude that $N_{\mathcal{G}(C)}=\emptyset$.
% If $x,y\in X$ are such that $\hat f(x)=\hat f(y)$ for every $f\in \mathcal{G}(C)$ then $\hat f(x)\le \hat f(y)$ and $\hat f(x)\ge \hat f(y)$ which reads $x\le y$ and $y\le x$ and so $x=y$, because $\le$ is an order.
 %This proves that $\mathcal{G}(C)$ distinguishes points in $X$.

% This proves that $N_{\mathcal{G}(C)}=\emptyset$ RIVEDERE.
%This proves that $\mathcal{G}(C)$ is a uniformly closed semi-algebra of $C(X,\mathbb{R})$.
Condition (iv)2 implies that the spectrum of any $f\in C$ is non-negative, for $-1$ does not belong to the spectrum of $a^{-1}f$ for $a>0$ and so $-a$ does not belong to the spectrum of $f$.

But Gelfand calculus shows that for $f\in \mathcal{G}(C)$, $\mathcal{G}(\chi(f))=\chi(\hat f)$ in case 1, and $\mathcal{G}(\gamma(f))=\gamma(\hat f)$ in case 2, thus $\mathcal{G}(C)$ is in the domain of the operation and is preserved by it in each case. The assumptions of Thm.\ \ref{besm2} are satisfied, and using the closure of $\mathcal{G}(C)$ we get $\mathcal{G}(C) =C^+(X,\le,\mathbb{R})$.
%thus $\mathcal{G}(C)$ is sent to itself under the map $\hat f\to \hat f/(1+\hat f)$. The Gelfand map sends the unit $e$ to 1, thus $1\in \mathcal{G}(C)$. This proves that $\mathcal{G}(C)$ is a Type I semi-algebra with unity in the sense of Bonsall. His main theorem \cite[Thm.\ 7]{bonsall60} states that constructing the topology and order as we did above, this semi-algebra is indeed
\end{proof}

\begin{remark}
Properties $(i)-(iv)$ do not really require that $B$ is commutative thanks to functional calculus on $C^*$-algebras. They can be used to define a special type of cone on $B$. The theorem then implies that the category of  commutative  unital complex $C^*$-algebra endowed with such a cone is equivalent to that of compact ordered spaces (see Thm.\ \ref{equivalence_theorem}).
\end{remark}
%
%\begin{theorem}
%There is a one-to-one correspondence between (a) compact ordered spaces $(X,\mathscr{T},\le)$, and (b) pairs $(B,C)$ where $B$ is a commutative  unital complex $C^*$-algebra and $C\subset B$ satisfies (i)-(iv).
%\end{theorem}
%
%\begin{proof}
%Starting from $(B,C)$ we get $(X,\mathscr{T},\le)$ as explained in Thm.\ \ref{cpp}. Starting from $(X,\mathscr{T},\le)$ we define $B:=C(X,\mathbb{C})$ and $C:=C^+(X,\le, \mathbb{R})$. Denoting the former map with $\alpha$ and the latter map with $\beta$, we observe that the characters of $B=C(X,\mathbb{C})$ are given by the evaluations  $\tau_x(f):=f(x)$ for $x\in X$, that is the space of characters can be identified with $X$. But the weak*-topology is that which makes all these functions $x\mapsto f(x)$ continuous namely it is the initial topology of the functions $f\in B$, which, by complete regularity, coincide with $\mathscr{T}$. Also the order is $\le_C$, i.e.\ that which makes all the functions $f\in C$ isotone, which by complete order regularity of compact ordered spaces \cite{nachbin65} coincides with $\le$. In conclusion $\alpha \circ \beta$ is the identity, which
%\end{proof}

\begin{theorem} \label{equivalence_theorem}
There is a {\em one-to-one correspondence} (a duality) between:
\begin{enumerate}
    \item Compact ordered spaces $(X, \mathscr{T}, \le)$, and
    \item Pairs $(B, C)$, where $B$ is a commutative unital complex $C^*$-algebra and $C$ is a subset of $B$ satisfying the conditions (i)-(iv) of Theorem \ref{cpp}.
\end{enumerate}
This correspondence is implemented by the following mutually inverse structure-preserving maps:

\begin{itemize}
    \item[] {Map I (Algebra $\to$ Space):} Given $(B, C)$, construct the space $(X, \mathscr{T}, \le)$ where $X$ is the Gelfand spectrum (set of characters) of $B$, $\mathscr{T}$ is the weak*-topology, and the order $\le$ is defined by $x \le y \iff x(f) \le y(f)$ for all $f \in C$.
    \item[]{Map II (Space $\to$ Algebra):} Given $(X, \mathscr{T}, \le)$, construct the pair $(B, C)$ where $B = C(X, \mathbb{C})$ (the continuous complex functions) and $C = C^+(X, \le, \mathbb{R})$ (the non-negative continuous isotone real functions).
\end{itemize}
\end{theorem}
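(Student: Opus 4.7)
The plan is to verify the four standard items for a duality at the level of objects: well-definedness of Map~I and Map~II, and the two composition identities. The statement mentions nothing about morphisms, so I will stick to the object correspondence.

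Well-definedness of Map~I is essentially the content of Theorem~\ref{cpp}, which already produces a compact Hausdorff space with a closed antisymmetric order from $(B,C)$; antisymmetry uses condition (iii) to extend the vanishing of $y-x$ from $C$ to $C-C$, then to $H = \overline{C-C}$, and finally to $B = H + iH$. For Map~II, I would verify conditions (i)--(iv) of Theorem~\ref{cpp} for the pair $(C(X,\mathbb{C}), C^+(X,\le,\mathbb{R}))$: (i) is immediate; (ii) follows because the uniform limit of non-negative continuous isotone functions is again of that form; (iii) is exactly Proposition~\ref{vks} together with the identification of the hermitian elements of $C(X,\mathbb{C})$ with $C(X,\mathbb{R})$; (iv) holds because both $\chi$ and $\gamma$ preserve non-negativity and monotonicity.

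For the composition Map~II $\circ$ Map~I, Theorem~\ref{cpp} itself directly supplies a unit-preserving \mbox{*-isometry} $\mathcal{G}\colon B \to C(X,\mathbb{C})$ that carries $C$ bijectively onto $C^+(X,\le,\mathbb{R})$, which is precisely the required isomorphism of pairs. For the other composition Map~I $\circ$ Map~II, starting from a compact ordered space $(X,\mathscr{T},\le)$, the classical commutative Gelfand--Naimark theorem says the evaluation map $x \mapsto \mathrm{ev}_x$ is a homeomorphism from $X$ onto the spectrum of $C(X,\mathbb{C})$ equipped with the weak*-topology. What remains is that the order induced on the spectrum by $C = C^+(X,\le,\mathbb{R})$ pulls back to the original order on $X$, which, unfolding definitions, reads $x \le y \iff f(x) \le f(y)$ for every $f \in C^+(X,\le,\mathbb{R})$, i.e.\ $\le \;=\; \le_{C^+(X,\le,\mathbb{R})}$.

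The main (and only nontrivial) obstacle is this last order-matching step: the original closed order must be completely determined by the non-negative continuous isotone functions. This is precisely Nachbin's complete order regularity for compact preordered spaces, already invoked in the introduction of the paper when asserting $\le \;=\; \le_{C(X,\le,\mathbb{R})}$ and equivalently $\le \;=\; \le_{C^+(X,\le,\mathbb{R})}$ after adding a sufficiently large constant to pass from $C$ to $C^+$.
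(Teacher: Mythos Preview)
Your proposal is correct and follows essentially the same approach as the paper: both verify well-definedness of Map~I via Theorem~\ref{cpp}, check conditions (i)--(iv) for Map~II using Proposition~\ref{vks} for (iii), invoke Theorem~\ref{cpp} again for Map~II $\circ$ Map~I, and use Gelfand--Naimark plus Nachbin's complete order regularity for Map~I $\circ$ Map~II. You have correctly identified the order-matching step as the only nontrivial point and resolved it exactly as the paper does.
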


\begin{proof}
We must show that Map I and Map II are well-defined and that they are mutually inverse (up to canonical isomorphism).
\begin{enumerate}
\item Map I is Well-Defined.\\
Given a pair $(B, C)$, Theorem \ref{cpp} proved that the constructed triple $(X, \mathscr{T}, \le)$ is indeed a compact ordered space. Furthermore, the Gelfand map $\mathcal{G}$ is a unit-preserving $\ast$-isometry that establishes a canonical isomorphism:
\[
\mathcal{G}: (B, C) \to (C(X, \mathbb{C}), C^+(X, \le, \mathbb{R})).
\]
%Thus, Map I maps the abstract algebraic structure to the concrete functional structure that defines a compact ordered space.
\item Map II is Well-Defined.\\
Given a compact ordered space $(X, \mathscr{T}, \le)$, we must show that the pair $(B, C)$, where $B = C(X, \mathbb{C})$ and $C = C^+(X, \le, \mathbb{R})$, satisfies the required conditions (i)-(iv) of Theorem \ref{cpp}.
%$B = C(X, \mathbb{C})$ is a commutative unital complex $C^*$-algebra. $C = C^+(X, \le, \mathbb{R})$ is the set of non-negative, continuous, isotone real functions on $X$.
\begin{itemize}
    \item[]{(i) Convex Cone:} If $f, g \in C$, then $f+g \in C$ and $\lambda f \in C$ for $\lambda > 0$, by the definition of non-negative isotone functions.
    \item[]{(ii) Closed:} $C\subset C(X, \mathbb{R})$ is obtained imposing  to its elements the condition $f\ge 0$ and for each $x,y\in X$, $x\le y$, the conditions $f(x)\le f(y)$. These conditions are uniformly closed so is their intersection $C$. Since $C$ is a uniformly closed subset of $C(X, \mathbb{R})$, it is also a uniformly closed subset of $C(X, \mathbb{C})$.
    %is the intersection of the uniformly closed set $C(X, \mathbb{R})$ and the closed set of non-negative functions, restricted by the closed order relation $\le$. $C$ is thus uniformly closed.
    \item[]{(iii) $\overline{C-C}=H$:} The set of hermitian elements $H$ in $B=C(X, \mathbb{C})$ is $C(X, \mathbb{R})$. By Prop.\ \ref{vks} the uniform closure of the set of differences of continuous isotone functions equals $C(X, \mathbb{R})$. Thus, $\overline{C-C} = C(X, \mathbb{R}) = H$.
    \item[]{(iv) $\phi$-Invariance:} For any continuous non-decreasing function $\phi: [0, \infty) \to [0, \infty)$ (satisfying the properties of Theorem \ref{besm2}), if $f \in C$, then $\phi(f)$ is also continuous, non-negative, and isotone. Since conditions (iv)1 and (iv)2 rely on specific functions $\chi$ and $\gamma$ which are of this type, $C$ is closed under these operations.
\end{itemize}
Thus, Map II is well-defined.
\item Mutually Inverse Property.\\
We verify the composition of the maps.
\begin{itemize}
    \item[]{Map II $\circ$ Map I:} Starting with $(B, C)$ and applying Map I followed by Map II results in the structure $(C(X, \mathbb{C}), C^+(X, \le, \mathbb{R}))$. By the result of Theorem \ref{cpp}, this resulting structure is canonically $\ast$-isomorphic to the starting structure $(B, C)$.
    \item[]{Map I $\circ$ Map II:} Starting with $(X, \mathscr{T}, \le)$, Map II yields
        \[
        (B', C') = (C(X, \mathbb{C}), C^+(X, \le, \mathbb{R})).
         \]
         Map I takes the Gelfand spectrum of $B'$. By the Gelfand-Naimark Theorem, the spectrum is canonically homeomorphic to $X$. Furthermore, the order generated by $C'$ is precisely the original order $\le$ of the compact ordered space $X$ (by complete order regularity of the latter). Thus, the resulting structure is identical to the starting structure: $(X, \mathscr{T}, \le)$.
\end{itemize}
\end{enumerate}
Since the mappings are mutually inverse (up to canonical isomorphism), the correspondence is established.
\end{proof}

\section{Bernstein's type realization of the rational approximation} \label{secber}
In this section we provide an explicit rational approximation of continuous isotone functions  over $[0,b]\subset \mathbb{R}$  whose existence was established in Theorem \ref{cptr}.

\begin{theorem}
Let \( b > 0 \) and let \( f: [0, b] \to \mathbb{R} \) be a non-negative, non-decreasing continuous function. Extend \( f \) to \([0, \infty)\) by setting \( f(x) = f(b) \) for \( x > b \). For each integer \( n \geq 1 \), define the rational function
\[
R_n(f; x) = \sum_{k=0}^n f\left( \frac{bk}{n+1-k} \right) \binom{n}{k} \frac{b^{\,n-k} x^k}{(b + x)^n}, \quad x \in [0, b].
\]
Then:
\begin{enumerate}
    \item \( R_n(f; x) \) is
    %a rational function of the form \( \dfrac{P_n(x)}{(b+x)^n} \) where \( P_n(x) \) is a polynomial with non-negative coefficients, hence \( R_n(f; x) \) is
       continuous and the ratio of polynomials with non-negative coefficients.
    \item \( R_n(f; x) \) is non-decreasing on \([0, b]\).
    \item As \( n \to \infty \), \( R_n(f; x) \) converges uniformly to \( f(x) \) on \([0, b]\).
\end{enumerate}
\end{theorem}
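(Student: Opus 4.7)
The plan is to recognize $R_n(f;x)$ as a modified Bernstein polynomial via the substitution $t = x/(b+x)$, which is a homeomorphism from $[0,b]$ onto $[0,\tfrac12]$ with inverse $x = bt/(1-t)$. Since $1-t = b/(b+x)$, this identifies $\binom{n}{k} b^{n-k} x^k / (b+x)^n$ with $\binom{n}{k} t^k (1-t)^{n-k}$, so
\begin{equation*}
R_n(f;x) = \sum_{k=0}^n f(a_k)\, \binom{n}{k}\, t^k (1-t)^{n-k}, \qquad a_k := \frac{bk}{n+1-k}.
\end{equation*}
A direct computation gives $a_{k+1} - a_k = b(n+1)/[(n+1-k)(n-k)] > 0$ for $0\le k < n$, so $0 = a_0 < a_1 < \cdots < a_n = bn$; moreover $s_k := k/(n+1)$ is the $t$-preimage of $a_k$ under $x=bs/(1-s)$.

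Parts 1 and 2 then follow quickly. Clearing the denominator, $R_n(f;x) = P_n(x)/(b+x)^n$ with $P_n(x) = \sum_k f(a_k)\binom{n}{k} b^{n-k} x^k$, whose coefficients are non-negative (since $f\ge 0$), as are those of $(b+x)^n$; positivity of the denominator on $[0,b]$ gives continuity. For monotonicity, the standard Bernstein-derivative identity
\begin{equation*}
\tfrac{d}{dt}\sum_{k=0}^n c_k \binom{n}{k} t^k(1-t)^{n-k} = n\sum_{k=0}^{n-1} (c_{k+1}-c_k)\binom{n-1}{k} t^k(1-t)^{n-1-k}
\end{equation*}
combined with $dt/dx = b/(b+x)^2 > 0$ and $f(a_{k+1}) \geq f(a_k)$ (since $f$ is non-decreasing and the $a_k$ are increasing in $k$) yields $dR_n/dx \ge 0$ on $[0,b]$.

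For Part 3, define $g:[0,1]\to\mathbb{R}$ by $g(s) = f(bs/(1-s))$ for $s\in[0,1)$ and $g(1) = f(b)$. The extension of $f$ by $f(b)$ on $[b,\infty)$ makes $g$ equal to $f(b)$ throughout $[\tfrac12,1]$, hence continuous (and therefore uniformly continuous) on $[0,1]$, with $g(s_k) = f(a_k)$. Consequently,
\begin{equation*}
R_n(f;x) = \sum_{k=0}^n g\!\left(\tfrac{k}{n+1}\right) \binom{n}{k} t^k (1-t)^{n-k},
\end{equation*}
which differs from the classical Bernstein polynomial $B_n(g;t) = \sum_k g(k/n)\binom{n}{k} t^k(1-t)^{n-k}$ only in its sampling points; since $|k/n - k/(n+1)| \leq 1/(n+1)$ and the weights $\binom{n}{k}t^k(1-t)^{n-k}$ sum to $1$,
\begin{equation*}
\sup_{x\in[0,b]} \bigl|R_n(f;x) - B_n(g;t(x))\bigr| \leq \omega_g\!\left(\tfrac{1}{n+1}\right) \to 0,
\end{equation*}
where $\omega_g$ is the modulus of continuity of $g$. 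Bernstein's classical theorem gives $B_n(g;t)\to g(t)$ uniformly on $[0,\tfrac12]$, and since $g(t(x)) = f(x)$ for $x\in[0,b]$, the triangle inequality yields uniform convergence $R_n(f;x)\to f(x)$ on $[0,b]$. The main technical point is this reconciliation of the non-standard sampling at $k/(n+1)$ with Bernstein's theorem, and behind it the choice of divisor $n+1-k$ rather than $n-k$ in $a_k$: it keeps $a_n = bn$ finite for every $n$, so that the extended $f$ is evaluated on bounded points and the auxiliary function $g$ is genuinely continuous at $s=1$.
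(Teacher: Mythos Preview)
Your proof is correct and takes a genuinely different route from the paper's. Both arguments begin with the same substitution $t = x/(b+x)$ (the paper calls it $p$) to recognize the Bernstein-basis structure, but they diverge thereafter. For Part~2 the paper interprets $R_n(f;x)$ as $\mathbb{E}[f(g_n(X))]$ with $X\sim\mathrm{Bin}(n,p)$ and invokes the stochastic monotonicity of the binomial family in $p$; you instead apply the Bernstein derivative identity directly, which is more elementary and avoids any probabilistic machinery. For Part~3 the paper works from scratch: it expresses the argument of $f$ as $b\hat p/(1-\hat p+1/n)$, bounds the deterministic drift $|x_n-x|$, controls the Lipschitz constant of $\psi_n$ on a safe region, and finishes with Chebyshev's inequality. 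Your reduction via the auxiliary function $g(s)=f(bs/(1-s))$ on $[0,1]$ is cleaner: once $g$ is seen to be continuous (constant on $[\tfrac12,1]$ thanks to the extension of $f$), the problem becomes the classical Bernstein theorem plus a uniform $\omega_g(1/(n+1))$ correction for the shifted nodes $k/(n+1)$. Your approach is shorter and leverages existing theory; the paper's is self-contained and makes the rate of convergence more explicit. Your remark on why the denominator is $n{+}1{-}k$ rather than $n{-}k$ is a nice structural observation not made explicit in the paper.
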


\begin{proof}
{Part 1: Form of the function}

We write the expression with a common denominator:
\[
R_n(f; x) = \frac{1}{(b+x)^n} \sum_{k=0}^n f\left( \frac{bk}{n+1-k} \right) \binom{n}{k} b^{\,n-k} x^k.
\]
Let \( P_n(x) \) denote the summation in the numerator. Since \( f \) is non-negative, \( b > 0 \), and the binomial coefficients are positive, every coefficient in the polynomial \( P_n(x) \) is non-negative. The denominator is \( (b+x)^n = \sum_{j=0}^n \binom{n}{j} b^{n-j}x^j \), which also has strictly positive coefficients. Thus, \( R_n(f; x) \) is a rational function with non-negative coefficients.

\bigskip
{Part 2: Monotonicity}

Let \( p = \dfrac{x}{b+x} \). Since \( x \in [0, b] \), we have \( p \in [0, 1/2] \). The function \( x \mapsto p(x) \) is strictly increasing.
Let \( X \) be a binomial random variable with parameters \( n \) and \( p \), i.e., \( X \sim \text{Bin}(n, p) \). We can rewrite the sum as an expectation:
\[
R_n(f; x) = \mathbb{E}\left[ f\left( g_n(X) \right) \right], \quad \text{where } g_n(k) = \frac{bk}{n+1-k}.
\]
First, observe that \( g_n(k) \) is a strictly increasing function of \( k \) for \( 0 \le k \le n \). Since \( f \) is non-decreasing, the composite function \( h(k) = f(g_n(k)) \) is non-decreasing in \( k \).

Second, the family of binomial distributions is stochastically increasing in the parameter \( p \). That is, if \( p_1 < p_2 \), then \( X_{p_1} \) is stochastically smaller than \( X_{p_2} \), implying \( \mathbb{E}[\phi(X_{p_1})] \le \mathbb{E}[\phi(X_{p_2})] \) for any non-decreasing function \( \phi \).

Since \( x \) increases \( p \), and increasing \( p \) increases the expectation of the non-decreasing function \( f(g_n(X)) \), it follows that \( R_n(f; x) \) is non-decreasing on \([0, b]\).

\bigskip
{Part 3: Uniform Convergence}

Since \( f \) is continuous on the compact interval \([0, b]\) (and constant thereafter), it is uniformly continuous on \([0, \infty)\). Let \( \varepsilon > 0 \). Choose \( \delta > 0 \) such that \( |f(u) - f(v)| < \varepsilon/2 \) whenever \( |u - v| < \delta \). Let \( M = \sup |f(x)| = f(b) \).

Let \( \hat{p} = X/n \) be the sample proportion. Note that \( \mathbb{E}[\hat{p}] = p \) and \( \text{Var}(\hat{p}) = \frac{p(1-p)}{n} \).
We analyze the argument of \( f \) inside the expectation. Let \( \delta_n = 1/n \). The argument is:
\[
Y_n = \frac{bk}{n+1-k} = \frac{b (k/n)}{1 - (k/n) + (1/n)} = \frac{b \hat{p}}{1 - \hat{p} + \delta_n}.
\]
Define the mapping \( \psi_n(t) = \dfrac{bt}{1 - t + \delta_n} \). We compare \( Y_n = \psi_n(\hat{p}) \) to \( x \).
Recall that \( x = \dfrac{bp}{1-p} \). We define the intermediate value \( x_n = \psi_n(p) = \dfrac{bp}{1-p+\delta_n} \).

{Step 3a: Deterministic error bound.}
First, we bound \( |x_n - x| \):
\[
|x_n - x| = \left| \frac{bp}{1-p+\delta_n} - \frac{bp}{1-p} \right| = \frac{bp \delta_n}{(1-p)(1-p+\delta_n)}.
\]
Since \( x \in [0, b] \), we have \( p \in [0, 1/2] \). Thus \( 1-p \ge 1/2 \) and \( 1-p+\delta_n \ge 1/2 \).
\[
|x_n - x| \le \frac{2b}{n}.
\]
For sufficiently large \( n \), \( |x_n - x| < \delta/2 \).

{Step 3b: Stochastic error bound.}
We now bound \( |\psi_n(\hat{p}) - \psi_n(p)| \). The derivative of $\psi_n$ is:
\[
\psi_n'(t) = \frac{b(1+\delta_n)}{(1-t+\delta_n)^2}.
\]
This derivative blows up near \( t=1 \), so we cannot use a global Lipschitz constant. However, we only care about \( p \in [0, 1/2] \).
Consider the interval \( S = [0, 3/4] \). If \( t \in S \), then \( 1-t+\delta_n \ge 1/4 \).
\[
\sup_{t \in S} |\psi_n'(t)| \le \frac{b(1+1)}{(1/4)^2} = 32b.
\]
Let \( K = 32b \). By the Mean Value Theorem, if \( \hat{p} \in S \) and \( p \in [0, 1/2] \), then:
\[
|\psi_n(\hat{p}) - \psi_n(p)| \le K |\hat{p} - p|.
\]

{Step 3c: Splitting the expectation.}
We want to bound \( |R_n(f; x) - f(x)| \le \mathbb{E}[|f(Y_n) - f(x)|] \).
Define the "good" set \( A = \{ |\hat{p} - p| < \eta \} \) where \( \eta = \min(\frac{1}{4}, \frac{\delta}{2K}) \).
Note that if we are in set \( A \), since \( p \le 1/2 \), we have \( \hat{p} < 3/4 \), so we are in the region \( S \) where the Lipschitz bound holds.

Split the expectation:
\[
\mathbb{E}[|f(Y_n) - f(x)|] = \mathbb{E}[|f(Y_n) - f(x)| \mathbb{I}_A] + \mathbb{E}[|f(Y_n) - f(x)| \mathbb{I}_{A^c}].
\]
\indent {Case 1 (In \( A \)):}
\[
|Y_n - x| \le |Y_n - x_n| + |x_n - x| \le K|\hat{p} - p| + \frac{2b}{n}.
\]
Inside \( A \), \( K|\hat{p}-p| < K\eta \le \delta/2 \). Choose \( n \) large enough so \( 2b/n < \delta/2 \).
Then \( |Y_n - x| < \delta \), which implies \( |f(Y_n) - f(x)| < \varepsilon/2 \).

{Case 2 (In \( A^c \)):}
The integrand is bounded by \( 2M \). By Chebyshev's inequality:
\[
\mathbb{P}(A^c) = \mathbb{P}(|\hat{p} - p| \ge \eta) \le \frac{p(1-p)}{n\eta^2} \le \frac{1}{4n\eta^2}.
\]
This converges to 0 uniformly for all \( p \in [0, 1/2] \).

Combining the terms:
\[
|R_n(f; x) - f(x)| \le \frac{\varepsilon}{2} + 2M \left( \frac{1}{4n\eta^2} \right).
\]
For sufficiently large \( n \), the second term is less than \( \varepsilon/2 \). Thus \( |R_n(f; x) - f(x)| < \varepsilon \) for all \( x \in [0, b] \).
\end{proof}

\section{Conclusions}
We have established a general approximation theorem for isotone functions on a compact preordered space (Thm.\ \ref{besm2}). The theorem employs a family of functions $S$, where the core requirement is its invariance under a single transformation $\phi$ with specific fixed-point properties. This minimalist invariance condition provides considerable flexibility, allowing $\phi$ to be chosen as an algebraic expression.
%We have presented some natural candidates for this function.

As a concrete application, we obtained the uniform approximation on compact intervals of continuous isotone functions by isotone rational functions with non-negative coefficients (Thm.\  \ref{cptr}), complete with an explicit realization of Bernstein type (Sec.\ \ref{secber}).

Ultimately, this framework offers a new means to characterize function spaces that can be interpreted as families of continuous isotone functions over a compact set $X$ endowed with a closed order (Sec.\ \ref{secalg}).

Our primary motivation comes from spacetime geometry as we intend to apply this algebraic characterization to recover the causal structure of spacetime. In fact, we showed in previous work   that every stably causal spacetime (a time-oriented Lorentzian manifold) is quasi-uniformizable and can thus be densely embedded into a compact ordered space via Nachbin’s compactification \cite[Thm.\ 4.15]{minguzzi12d} \cite{minguzzi25}. This connection opens a potential pathway toward unifying General Relativity with Quantum Mechanics, a direction of investigation we plan to explore in future work.

%% The Appendices part is started with the command \appendix;
%% appendix sections are then done as normal sections
%\appendix
%\section{Example Appendix Section}
%\label{app1}

%Appendix text.

%% For citations use:
%%       \cite{<label>} ==> [1]

%%
%Example citation, See \cite{lamport94}.

%% If you have bib database file and want bibtex to generate the
%% bibitems, please use
%%
%%  \bibliographystyle{elsarticle-num}
%%  \bibliography{<your bibdatabase>}

%% else use the following coding to input the bibitems directly in the
%% TeX file.

%% Refer following link for more details about bibliography and citations.
%% https://en.wikibooks.org/wiki/LaTeX/Bibliography_Management

%\bibliography{../../bibliografie/simultaneity,../../bibliografie/libri,../../bibliografie/miei,../../bibliografie/mieiPrep,../../bibliografie/mieiProc}

\begin{thebibliography}{10}

\bibitem{besnard09}
F.~Besnard.
\newblock A noncommutative view on topology and order.
\newblock {\em J. Geom. Phys.}, 59(7):861--875, 2009.

\bibitem{besnard13}
F.~Besnard.
\newblock An approximation theorem for non-decreasing functions on compact
  posets.
\newblock {\em J. Approx. Theory}, 172:1--3, 2013.

\bibitem{besnard15}
F.~Besnard.
\newblock Noncommutative ordered spaces: examples and counterexamples.
\newblock {\em Class. Quantum Grav.}, 32(13):135024, 26, 2015.

\bibitem{besnard15b}
F.~Besnard.
\newblock Two roads to noncommutative causality.
\newblock {\em J. Phys.: Conf. Series}, 634:012009, 2015.

\bibitem{bonsall60}
F.~F. Bonsall.
\newblock Semi-algebras of continuous functions.
\newblock {\em Proc. London Math. Soc.}, 10:122--140, 1960.

\bibitem{brosowski81}
B.~Brosiwski and F.~Deutsch.
\newblock An elementary proof of the {S}tone-{W}eierstrass theorem.
\newblock {\em Proc. Am. Math. Soc.}, 81:89--92, 1981.

\bibitem{devore93}
R.~A. {DeVore} and G.~G. Lorentz.
\newblock {\em Constructive Approximation}.
\newblock {Springer-Verlag}, Berlin, 1993.

\bibitem{fletcher82}
P.~Fletcher and W.~Lindgren.
\newblock {\em Quasi-uniform spaces}, volume~77 of {\em Lect. {N}otes in {P}ure
  and {A}ppl. {M}ath.}
\newblock Marcel {D}ekker, {I}nc., New York, 1982.

\bibitem{lorentz68}
G.G. Lorentz and K.L. Zeller.
\newblock Degree of approximation by monotone polynomials {I}.
\newblock {\em J. Approx. Theory}, 1:501--504, 1968.

\bibitem{minguzzi12d}
E.~Minguzzi.
\newblock Convexity and quasi-uniformizability of closed preordered spaces.
\newblock {\em Topol. Appl.}, 160:965--978, 2013.
\newblock {arXiv}:1212.3776.

\bibitem{minguzzi11f}
E.~Minguzzi.
\newblock Normally preordered spaces and utilities.
\newblock {\em Order}, 30:137--150, 2013.
\newblock {arXiv}:1106.4457v2.

\bibitem{minguzzi25}
E.~Minguzzi.
\newblock Destructuring {P}hysics: {A} functional derivation of spacetime.
\newblock {\em J. of Phys.:\ Conf. Ser.}, 3017:012004, 2025.
\newblock {C}ontribution to the proceedings of the conference `DICE24,
  Spacetime-Matter-Quantum Mechanics', {C}astello {P}asquini, {C}astiglioncello
  (Italy) September 16 - 22, 2024.

\bibitem{murphy90}
G.~J. Murphy.
\newblock {\em $C^*$-algebras and operator theory}.
\newblock Academic {P}ress, Boston, 1990.

\bibitem{nachbin65}
L.~Nachbin.
\newblock {\em Topology and order}.
\newblock D.\ {V}an {N}ostrand {C}ompany, {I}nc., Princeton, 1965.


\bibitem{stone48}
M.~H. Stone.
\newblock The generalized {W}eierstrass approximation theorem.
\newblock {\em Math. Mag.}, 21:167--184, 1948.

\bibitem{willard70}
S.~Willard.
\newblock {\em General topology}.
\newblock {Addison-Wesley} {P}ublishing {C}ompany, Reading, 1970.

\end{thebibliography}
%\bibliographystyle{plain}

%{\small
%\section*{\small Declaration of generative AI and AI-assisted technologies in the manu\-script preparation process}

%In the preparation of this work, the author used DeepSeek-V3.2 and Google Gemini 3 Pro as aids in developing the results of Sec.\ \ref{secber} and, more generally, for checking the English throughout the text. The author has reviewed and edited the content as needed after using these tools and takes full responsibility for the published article's content.
%}

\end{document}